\newcommand{\siamyesno}[2]{#2}   
\siamyesno{
\documentclass[onetabnum,onefignum,nohypdvips,final]{siamart171218}
\usepackage{amssymb,amsmath,epsfig,verbatim,enumitem}
\newtheorem{remark}[theorem]{Remark}
\newtheorem{ass}[theorem]{Assumption}
}{
\documentclass[11pt,a4paper]{article}
\usepackage{amssymb,amsmath,amsthm,epsfig,verbatim,xcolor,enumitem}
\newtheorem{theorem}{\sc Theorem.}[section]
\newtheorem{lemma}[theorem]{\sc Lemma.}
\newtheorem{remark}[theorem]{\sc Remark.}

\renewcommand{\theequation}{\arabic{section}.\arabic{equation}}
\newenvironment{AMS}%
{{\upshape\bfseries AMS subject classifications. }\ignorespaces}{}
\newenvironment{keywords}{{\upshape\bfseries Key words. }\ignorespaces}{}

}
\usepackage{ifpdf}  
\ifpdf
  \DeclareGraphicsExtensions{.eps,.pdf,.png,.jpg}
\else
  \DeclareGraphicsExtensions{.eps,.ps}
\fi
\newcommand{\bRplus}{{\bR}_{>0}}

\newcommand{\RZ}{{\bR} \slash {\mathbb Z}}
\newcommand{\bR}{{\mathbb R}}
\newcommand{\bB}{{\mathbb B}}

\newcommand{\bN}{{\mathbb N}}

\newcommand{\drho}{\;{\rm d}\rho}

\newcommand{\Id}{I\!d}
\newcommand{\dd}[1]{\frac{\rm d}{{\rm d}#1}}
\newcommand{\ddt}{\dd{t}}
\newcommand{\ratio}{{\mathfrak r}}
\newcommand{\Vh}{\underline{V}^h}
\newcommand{\Vpartial}{\underline{V}_\partial}
\newcommand{\Vhpartial}{\underline{V}_\partial^h}

\def\epsilon{\varepsilon} 
\def\hat{\widehat}
\def\widebar{\overline}
\def\bar{\widebar}
\def\tilde{\widetilde}

\siamyesno{}{
\textwidth 455pt \oddsidemargin 0pt \evensidemargin 0pt \headsep
0pt \headheight 0pt \textheight 655pt \parskip 10pt \parindent 0pt

}

\begin{document}
\title{
A filtered time stepping scheme for curve shortening flow \\ 
for open and closed curves}
\author{Klaus Deckelnick\footnotemark[2]\ \and 
        Robert N\"urnberg\footnotemark[3]}

\renewcommand{\thefootnote}{\fnsymbol{footnote}}
\footnotetext[2]{Institut f\"ur Analysis und Numerik,
Otto-von-Guericke-Universit\"at Magdeburg, 39106 Magdeburg, Germany \\
{\tt klaus.deckelnick@ovgu.de}}
\footnotetext[3]{Dipartimento di Matematica, Universit\`a di Trento,
38123 Trento, Italy \\ {\tt robert.nurnberg@unitn.it}}

\date{}

\maketitle

\begin{abstract}
We propose a filtered time stepping finite element scheme for 
curve shortening flow of open and closed curves in arbitrary codimension that is second-order
accurate in time. Open curves are assumed to
evolve inside a given domain $\Omega \subset \bR^n$, $n\geq2$,
and meet the external boundary $\partial\Omega$ orthogonally.
We prove optimal error bounds for the $L^2$-- and $H^1$--norms.
In practice only a single linear system needs to be solved at each time step. 
Numerical experiments confirm the accuracy and practicality of the 
introduced method, including an asymptotic equidistribution property.
\end{abstract} 

\begin{keywords} 
curve shortening flow; DeTurck trick; 
finite elements; finite differences; error analysis; open curves;
higher codimension;
\end{keywords}

\begin{AMS} 
65M60, 
65M06, 
65M12, 
65M15, 
35K55  
\end{AMS}

\renewcommand{\thefootnote}{\arabic{footnote}}

\siamyesno{
\pagestyle{myheadings}
\thispagestyle{plain}
\markboth{K. DECKELNICK AND R. N\"URNBERG}
{}
}{}

\setcounter{equation}{0}
\section{Introduction} 

We consider the evolution of a curve inside a domain $\Omega \subset \bR^n$, 
$n \geq 2$,
with normal velocity given by its curvature. In the case of an open curve,
it meets the boundary $\partial\Omega$ at a right angle. It is
well known that this geometric evolution law is the $L^2$--gradient flow of the
curve's length, and is therefore known as curve shortening flow.

Following \cite{DeckelnickE98}, 
we consider a parametric approach and aim to find a mapping 
$x: \overline I \times [0,T] \to \bR^n$ such that
\begin{subequations} \label{eq:pde}
\begin{equation}  \label{eq:csf}
{|x_\rho|^2} x_t - {x_{\rho \rho}}   = 0\qquad
\text{ in } I \times (0,T],
\end{equation}
where $I = \RZ$ in the case of a
closed curve and $I=(0,1)$ for an open curve. In the latter case, we prescribe
the boundary conditions
\begin{equation}  \label{eq:bc} 
x(\rho,t)  \in \partial \Omega,  \; x_\rho(\rho,t) \cdot v  =0   \mbox{ for all } v \in T_{x(\rho,t)} \partial \Omega \quad \qquad \forall (\rho,t) \in  \partial I \times (0,T].
\end{equation}
Here,  $T_z \partial \Omega$ denotes the tangent space at $z \in \partial \Omega$.
Finally, we impose the initial condition
\begin{equation} \label{eq:pded} 
x(\cdot,0)  = x_0  \text{ in } \overline {I}.
\end{equation}
\end{subequations}
We remark that the formulation \eqref{eq:csf} for curve
shortening flow can be derived with the help of
the DeTurck trick, see e.g.\ \cite{ElliottF17}. 
It can be easily shown that
solutions to \eqref{eq:pde} reduce the Dirichlet energy $\int_I |x_\rho|^2
\drho$ in time, cf.\ \eqref{eq:contstab} below, which means that the solutions
will be driven towards parameterizations that are proportional to arclength.
On the discrete level, this yields approximations with 
well distributed vertices that asymptotically become equidistributed. 
For a theoretical background on the flow, we refer to
\cite{GageH86,Grayson87} in the case of closed curves and to
\cite{RubinsteinSK89,KatsoulakisKR95,NguyenV26preprint} 
in the case of open curves. 
In particular, well-posedness of \eqref{eq:pde} for sufficiently regular
$\partial\Omega$ and $x_0$, and a sufficiently small $T>0$,
is shown in \cite{KatsoulakisKR95} for the planar case, and in 
\cite{NguyenV26preprint} for general $n\geq2$.

The last three
decades have seen a lot of interest in the parametric approximation of
curve shortening flow for closed curves, see the review articles
\cite{DeckelnickDE05,bgnreview} and the references therein. Let us mention in
particular \cite{Dziuk94,DeckelnickD95,curves3d}, in which curves evolving
in higher codimension are allowed. As the
numerical analysis for geometric evolution equations matures, the focus in
recent years has shifted towards higher order methods in space and time. Corresponding schemes,
albeit without error analysis, have been proposed in 
\cite{BalazovjechM11,MackenzieNRI19,DuanLZ21,JiangSZ24,JiangSZ24a,ZhangAF26preprint},
see \cite[Section~1]{csftime} for a more detailed description of these contributions. 
To the best of our knowledge, the first rigorous
result for \eqref{eq:pde} was obtained by the present authors in \cite{csftime}, where second-order error bounds for a
predictor-corrector scheme in the case
of closed curves were derived. Subsequently, analogous bounds were proved in
\cite{LiWW25preprint,Duan26preprint} for a Crank--Nicolson scheme and a BDF2 method. Note
that \cite{LiWW25preprint} is concerned with mean curvature flow of axisymmetric
surfaces, giving rise to an evolution equation for the profile curve that is related to
curve shortening flow. Let us also mention that \cite{BinzK23} contains an error analysis
for curve shortening flow in higher codimension in a setting that uses additional variables
and a BDF method for time discretization.

In contrast, the approximation of curve
shortening flow for open curves has been less well studied. Let us mention 
\cite{DeckelnickE98}, where error estimates for a semidiscretization of \eqref{eq:pde} in
the planar case are obtained. It is the aim of this paper to propose and analyze a fully discrete scheme, which
applies to curves evolving by \eqref{eq:pde} in any codimension, and which is second order in time.
Here, the main difficulty compared to the case of closed curves arises from estimating the boundary error terms. 
A natural strategy is to combine corresponding ideas from \cite{DeckelnickE98}
with the predictor-corrector approach from \cite{csftime} in order to obtain a method that is second order in time. In order to 
mimic the analysis of \cite{DeckelnickE98} it is necessary to write some of the boundary error terms as discrete time
derivatives, which, however, does not seem to be possible within the approach of \cite{csftime}. Instead, inspired by \cite{LiWF23}, we propose a scheme
that uses a backward Euler type step followed by a postprocessing step which simply
interpolates linearly between different discrete solutions. This procedure results in a BDF2 time discretization, which turns out to be more favourable, but
whose initialisation needs values of the discrete solution at times $t=0$ and $t=\Delta t$, with $\Delta t$ denoting the time step size. 
The definition of the discrete solution at $t=\Delta t$
requires particular care, in order to preserve the second-order accuracy of the scheme. To this end, we propose a linear system which uses curvature
information of $\partial\Omega$ for defining appropriate discrete boundary conditions. 

The remainder of the paper is organised as follows. 
In Section~\ref{sec:fd} we introduce our finite element approximation to \eqref{eq:pde} and prove its well-posedness. Our main result, 
an optimal error estimate, is stated and proved in Section~\ref{sec:ana}. For its proof it is convenient to rewrite the scheme as a finite difference
method.
In Section~\ref{sec:nr} we present some numerical simulations that confirm
the theoretical results and show the practicality of our proposed method. In the Appendix we introduce and analyse the linear system which
we use to initialise our scheme.

\subsection*{Notation}
For $\ell\in \bN_0$ we denote the norm of the Sobolev space
$H^{\ell}(I)$ by $\|\cdot\|_\ell$, with the associated semi-norm written as
$|\cdot|_\ell$. We will denote the $L^2$--inner product in $I$ by
$(\cdot,\cdot)$.
These notations naturally extend to vector functions, and we will write 
$[H^{\ell}(I)]^n$ for a vector function with $n$ components.
Throughout this paper, $c$ denotes a generic positive constant independent of 
the mesh parameter $h$ and the time step size $\Delta t$.
At times $\epsilon$ will play the role of a (small)
positive parameter, with $c_\epsilon>0$ depending on $\epsilon$, but
independent of $h$ and $\Delta t$.

\setcounter{equation}{0}
\section{Weak formulation and finite element discretisation} \label{sec:fd}

In what follows, and similarly to \cite{DeckelnickE98}, 
we assume that $\Omega \subset \bR^n$ is a domain whose boundary
$\partial\Omega$ can be described as the zero level set of a smooth function.
In particular, let $U$ be some open neighbourhood of $\partial \Omega$, and let
$F\in C^3(U)$ be such that
\begin{equation} \label{eq:defF}
\partial\Omega = \{ z \in U : F(z) = 0 \}
\qquad \mbox{and}
\qquad |\nabla\,F(z)| =1 \quad \forall z \in \partial \Omega.
\end{equation}
Let $x_0 \in H^1(I)$ with $F(x_0)=0$ on $\partial I$. 
For a mapping $x :I \times [0,T] \to \bR^n$ satisfying $x(\cdot,0)=x_0$,  the boundary conditions \eqref{eq:bc} can then be equivalently formulated in the form
\begin{subequations} \label{eq:pdebc}
\begin{align}
x_t  \cdot \nabla F(x) &=0 \qquad \text{on } \partial I \times (0,T], \label{eq:pdeb} \\
P(x) x_\rho & =0 \qquad \text{on } \partial I \times (0,T], \label{eq:pdec}
\end{align}
\end{subequations}
where $P(x) = \Id - \nabla F(x) \otimes \nabla F(x)$ is the projection
onto $T_x \partial \Omega$.
The first relation implies that $F(x(\cdot,t))=F(x_0)=0$ on $\partial I$, so that $x(\rho,t) \in \partial \Omega$ for $\rho \in \partial I$ and  $t \in (0,T]$,
while \eqref{eq:pdec} is clearly equivalent to the second condition in 
\eqref{eq:bc}. 
Next, define for $w \in [H^1(I)]^n$ the function space
\[
\Vpartial(w) = \{ \eta \in [H^1(I)]^n : \eta \cdot \nabla F(w) = 0
\text{ on } \partial I\}.
\]
A weak formulation of \eqref{eq:pde} is then given by: Find $x :I \times [0,T] \to \bR^n$ such 
that $x(\cdot,0)= x_0$, $x_t(\cdot,t) \in \Vpartial(x(t))$ for $t \in (0,T]$ and 
\begin{align} 
( x_t \cdot \eta, |x_\rho|^2 ) + ( x_\rho , \eta_\rho ) = 0
\qquad \forall \eta \in \Vpartial(x). \label{eq:csfweak}
\end{align}
It is not difficult to verify that if $x: I \times [0,T] \to \bR^n$
is a solution of \eqref{eq:csfweak} that is sufficiently regular, then
$x$ satisfies \eqref{eq:csf} and \eqref{eq:pdebc}, and hence \eqref{eq:pde}. Note that \eqref{eq:pdec} arises as the natural boundary condition from \eqref{eq:csfweak}. By choosing $\eta=x_t$ in \eqref{eq:csfweak}
we immediately see that
\begin{equation} \label{eq:contstab}
( | x_t |^2 , | x_\rho |^2 ) + \tfrac{1}{2} \ddt ( | x_\rho |^2 , 1) =0.
\end{equation}

Let us next use the weak formulation \eqref{eq:csfweak} in order to discretise our problem. We decompose $[0,1]$ into the subintervals $I_j=[\rho_{j-1},\rho_j]$, where $\rho_j=jh$, $j=0,1,\ldots,J$, $J\geq 2$.
In the case of a closed curve we identify $\rho_J = \rho_0$. Moreover, we let
$J_0 = J$ if $I = \RZ$ and $J_0 = J-1$ if $I=(0,1)$.
For two piecewise continuous functions, with possible jumps at the 
nodes $\{\rho_j\}_{j=1}^{J_0}$, we define the mass lumped $L^2$--inner product 
\begin{equation*} 
( u, v )^h = \tfrac12\sum_{j=1}^J h
\left[(u\cdot v)(\rho_j^-) + (u \cdot v)(\rho_{j-1}^+)\right],
\end{equation*}
where $(u \cdot v)(\rho_j^\pm)=\underset{\delta\searrow 0}{\lim}\ 
(u \cdot v)(\rho_j\pm\delta)$. 
We also define the finite element space
\[
V^h = \{\chi \in C^0(\overline I) : \chi\!\mid_{I_j} 
\text{ is affine},\ j=1,\ldots, J\} 
\]
as well as $\Vh = [V^h]^n$ and, for $w\in [H^1(I)]^n$,
$\Vhpartial(w) = \Vh \cap \Vpartial(w)$.
In order to discretize in time, let $t_m=m \Delta t$, $m=0,\ldots,M$, 
with the uniform time step size $\Delta t = \frac TM >0$. From now on, when no
confusion can arise, we use the shorthand notation 
$f^{m} := f(\cdot,t_m)$ for a function $f$ defined on $\overline I \times [0,T]$.

We propose the following filtered time-stepping scheme.
Set $x^0_h=I_h x_0$, where $I_h:[C^0(\overline I)]^n \to \Vh$ is the Lagrangian interpolation operator, and let $x^1_h \in \Vh$ be given. 
For
$m = 1,\ldots,M-1$, given $x^{m-1}_h, x^m_h \in \Vh$, let 
\begin{equation} \label{eq:defhatx}
\hat x^{m+1}_h:= 2 x^m_h - x^{m-1}_h.
\end{equation}
Then find $\bar x^{m+1}_h \in \Vh$ with
$\bar x^{m+1}_h - x^m_h \in \Vhpartial(\hat x^{m+1}_h)$, such that
\begin{subequations} \label{eq:feafilt}
\begin{align}
& \left(\frac{\bar x^{m+1}_h - x^m_h}{\Delta t} , \eta_h 
|\hat x^{m+1}_{h,\rho}|^2 \right)^h
+ \left(\bar x^{m+1}_{h,\rho} , \eta_{h,\rho} \right) = 0
\qquad \forall \eta_h \in \Vhpartial(\hat x^{m+1}_h) \label{eq:fea}
\end{align}
and update
\begin{align} \label{eq:update}
x^{m+1}_h:= \tfrac{2}{3} \bar x^{m+1}_h + \tfrac{2}{3} x^m_h - \tfrac{1}{3} x^{m-1}_h.
\end{align}
\end{subequations}

\begin{remark} \label{rem:fea}
The natural extension of the second order in time scheme from \cite{csftime} 
to the case of possibly open curves is given as follows.
First find
$x^{m+\frac12}_h\in\Vh$ with $x^{m+\frac12}_h - x^m_h \in \Vhpartial(x^m_h)$, 
such that
\begin{subequations} \label{eq:cnfea}
\begin{equation} \label{eq:cnpred}
\left(
\frac{x^{m+\frac12}_h-x^m_h}{\frac12\Delta t} , \eta_h |x^m_{h,\rho}|^2 \right)^h
+ \left( x^{m+\frac12}_{h,\rho} , \eta_{h,\rho} \right) = 0
\qquad \forall \eta_h \in \Vhpartial(x^m_h),
\end{equation}
and then find $x^{m+1}_h \in \Vh$ with 
$x^{m+1}_h - x^m_h \in \Vhpartial(x^{m+\frac12}_h)$ such that
\begin{equation} \label{eq:cncsfd}
\left( \frac{x^{m+1}_h-x^m_h}{\Delta t} , \eta_h 
|x^{m+\frac12}_{h,\rho}|^2 \right)^h
+ \tfrac12 \left(x^{m+1}_{h,\rho} + x^{m}_{h,\rho} , \eta_{h,\rho} \right) =  0
\qquad \forall \eta_h \in \Vhpartial(x^{m+\frac12}_h).
\end{equation}
\end{subequations}
Existence and uniqueness for \eqref{eq:cnfea}, as well as unconditional
stability, can be shown exactly as in \cite{csftime}. Unfortunately, it does
not seem to be possible to prove a quadratic convergence rate for the 
$L^2$--error for the scheme \eqref{eq:cnfea} in the case of open curves. 
In fact, some of our numerical results in
Section~\ref{sec:nr} indicate a suboptimal convergence rate for
\eqref{eq:cnfea} when the boundary $\partial\Omega$ is curved.
\end{remark}

For the subsequent analysis, it will be useful to 
introduce the abbreviations
\[
\hat f^{m+1}:= 2 f^m - f^{m-1}, \quad \tilde f^{m+1} := \tfrac{3}{2} f^{m+1} - f^m + \tfrac{1}{2} f^{m-1}, \quad
D_t f^{m+1} := \frac{3 f^{m+1}- 4f^m+ f^{m-1}}{2 \Delta t},
\]
so that \eqref{eq:feafilt} can be equivalently written in the form: Find $x^{m+1}_h \in \Vh$ such
that $D_t x^{m+1}_h \in \Vhpartial(\hat x^{m+1}_h)$ and
\begin{equation} \label{eq:fea1}
\left( D_t x^{m+1}_h, \eta_h 
|\hat x^{m+1}_{h,\rho}|^2 \right)^h
+ \left(\tilde x^{m+1}_{h,\rho} , \eta_{h,\rho} \right) = 0
\qquad \forall \eta_h \in \Vhpartial(\hat x^{m+1}_h).
\end{equation}

\begin{lemma} \label{lem:disctime}
Let $f: \overline I \times [0,T] \to \bR$. Then we have in $I$:
\begin{subequations}\label{eq:disctime}
\begin{align}
& D_t f^{m+1}  \cdot f^{m+1} = \frac{1}{4 \Delta t} \bigl( |  f^{m+1}  |^2 + |  \hat f^{m+2}  |^2 \bigr) \nonumber  \\ & \qquad \quad
- \frac{1}{4 \Delta t} \bigl( | f^{m}  |^2 + |  \hat  f^{m+1}  |^2 \bigr) + \frac{1}{4 \Delta t} | f^{m+1}  - 2 f^m  +  f^{m-1}  |^2,  \label{eq:disctime1}\\
& D_t f^{m+1} \cdot \hat f^{m+1} = \frac{1}{4 \Delta t} \bigl( 3 | f^{m+1}  |^2 - | f^m |^2 \bigr) \nonumber  \\ & \qquad\quad 
-  \frac{1}{4 \Delta t} \bigl( 3 | f^{m}  |^2 - | f^{m-1}  |^2 \bigr) - \frac{3}{4 \Delta t} | f^{m+1} - 2 f^m  +  f^{m-1} |^2, \label{eq:disctime2}\\
& D_t f^{m+1} \cdot \tilde f^{m+1} = \frac{1}{4 \Delta t} \bigl( | f^{m+1}  |^2 + |  \hat  f^{m+2}  |^2 + | f^{m+1}  - f^m |^2 \bigr) 
\nonumber  \\ & \qquad\quad 
- \frac{1}{4 \Delta t} \bigl( | f^{m}  |^2 + |  \hat  f^{m+1}  |^2 + | f^m  - f^{m-1}  |^2  \bigr) 
+ \frac{3}{4 \Delta t} | f^{m+1}  - 2 f^m  +  f^{m-1}  |^2. 
\label{eq:disctime3}
\end{align}
\end{subequations}
Analogous relations hold if the scalar product is replaced by a symmetric bilinear form.
\end{lemma}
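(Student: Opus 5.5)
These are pure algebraic identities, so the plan is a direct verification. I will set $a=f^{m+1}$, $b=f^m$, $c=f^{m-1}$, multiply each identity by $4\Delta t$, and compare the quadratic forms in $(a,b,c)$ on both sides. The left-hand sides are then
\[
4\Delta t\, D_t f^{m+1}\cdot g = 2(3a-4b+c)\cdot g, \qquad g\in\{a,\;2b-c,\;\tfrac32 a-b+\tfrac12 c\}.
\]
The right-hand sides contain $\hat f^{m+2}=2a-b$, $\hat f^{m+1}=2b-c$, and the second difference $a-2b+c$.

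For \eqref{eq:disctime1}, the left side equals $6|a|^2-8a\cdot b+2a\cdot c$. On the right I will expand
\[
|a|^2+|2a-b|^2-|b|^2-|2b-c|^2+|a-2b+c|^2 .
\]
The $|a|^2$ terms give $1+4+1=6$. The $|b|^2$ terms give $1-1-4+4=0$. The $|c|^2$ terms give $-1+1=0$. For the cross terms, $a\cdot b$ gives $-4-4=-8$, $b\cdot c$ gives $4-4=0$, and $a\cdot c$ gives $2$. This matches.

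For \eqref{eq:disctime2}, the left side equals $2(3a-4b+c)\cdot(2b-c)=12a\cdot b-6a\cdot c-16|b|^2+12b\cdot c-2|c|^2$. The right side is
\[
3|a|^2-|b|^2-3|b|^2+|c|^2-3|a-2b+c|^2 .
\]
Expanding the last term gives $-3|a|^2-12|b|^2-3|c|^2+12a\cdot b-6a\cdot c+12b\cdot c$. Adding, the coefficients are $0$ on $|a|^2$, $-16$ on $|b|^2$ and $-2$ on $|c|^2$, and the cross terms agree. This matches.

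For \eqref{eq:disctime3}, the quickest route is linearity. Since $\tilde f^{m+1}=\tfrac32 f^{m+1}-\tfrac12\hat f^{m+1}$, the left side equals $\tfrac32\cdot$\eqref{eq:disctime1} minus $\tfrac12\cdot$\eqref{eq:disctime2}. I then need to check that this combination equals the stated right side. Equivalently, the difference between the stated right side of \eqref{eq:disctime3} and the right side of \eqref{eq:disctime1} must equal $\tfrac12\bigl(\text{RHS}\eqref{eq:disctime1}-\text{RHS}\eqref{eq:disctime2}\bigr)$. This is again a finite coefficient comparison, and it is the only place where bookkeeping errors are likely, so I would fall back on a direct expansion of $2(3a-4b+c)\cdot(\tfrac32a-b+\tfrac12c)$ if needed. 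That expansion gives $9|a|^2-15a\cdot b+6a\cdot c+8|b|^2-8b\cdot c+|c|^2$. I will match it against
\[
|a|^2+|2a-b|^2+|a-b|^2-|b|^2-|2b-c|^2-|b-c|^2+3|a-2b+c|^2 ,
\]
checking each of the six coefficients in turn: $|a|^2$ gives $1+4+1+3=9$, and $a\cdot b$ gives $-4-2-12=-18$.

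This last count does not match the $-15$ from the direct expansion, so the stated constants need rechecking. One possibility is that the middle term should be $\tfrac12|f^{m+1}-f^m|^2$ or similar. In that case I will report the identity with whatever coefficient the expansion forces, or recompute using the combination $\tfrac32\cdot$\eqref{eq:disctime1}$-\tfrac12\cdot$\eqref{eq:disctime2} to locate the correct form. The main obstacle is therefore only arithmetic verification of \eqref{eq:disctime3}.

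Finally, the extension to a symmetric bilinear form $a(\cdot,\cdot)$ follows because every step used only bilinearity and symmetry, through $|u\pm v|^2=|u|^2\pm2u\cdot v+|v|^2$. Replacing $u\cdot v$ by $a(u,v)$ and $|u|^2$ by $a(u,u)$ throughout gives the same identities.
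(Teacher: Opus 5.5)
Your approach is the paper's own (``elementary calculations''), and your checks of \eqref{eq:disctime1} and \eqref{eq:disctime2} are correct. The gap is in \eqref{eq:disctime3}, where an arithmetic slip in expanding the left-hand side leads you to doubt an identity that is true. With $a=f^{m+1}$, $b=f^m$, $c=f^{m-1}$ one has
\[
2(3a-4b+c)\cdot\bigl(\tfrac32 a-b+\tfrac12 c\bigr)=9|a|^2-18\,a\cdot b+6\,a\cdot c+8|b|^2-6\,b\cdot c+|c|^2 .
\]
The $a\cdot b$ terms are $2(-3-6)=-18$ and the $b\cdot c$ terms are $2(-2-1)=-6$, not $-15$ and $-8$. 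The linear-combination route you set up gives the same result directly:
\[
\tfrac32\bigl(6|a|^2-8\,a\cdot b+2\,a\cdot c\bigr)-\tfrac12\bigl(12\,a\cdot b-6\,a\cdot c-16|b|^2+12\,b\cdot c-2|c|^2\bigr).
\]
Expanding the stated right-hand side $|a|^2+|2a-b|^2+|a-b|^2-|b|^2-|2b-c|^2-|b-c|^2+3|a-2b+c|^2$ gives these coefficients:
\begin{itemize}
\item $|a|^2$: $1+4+1+3=9$;
\item $a\cdot b$: $-4-2-12=-18$;
\item $a\cdot c$: $6$;
\item $|b|^2$: $1+1-1-4-1+12=8$;
\item $b\cdot c$: $4+2-12=-6$;
\item $|c|^2$: $-1-1+3=1$.
\end{itemize}
So \eqref{eq:disctime3} holds exactly as stated.

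As written, your proposal does not prove \eqref{eq:disctime3} and instead asserts a discrepancy that does not exist. Your suggested fallback, replacing $|f^{m+1}-f^m|^2$ by $\tfrac12|f^{m+1}-f^m|^2$, would make the identity false: the $|a|^2$ coefficient becomes $1+4+\tfrac12+3\neq 9$. This matters because \eqref{eq:disctime3} is the identity behind the stability estimate \eqref{eq:stab} and the error inequality \eqref{eq:erra4}. Once you redo the expansion, or finish the linear-combination check, the proof is complete. Your remark about replacing the scalar product by a symmetric bilinear form is fine.
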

\begin{proof}
The proof follows directly from elementary calculations. 
\end{proof}

We are now in a position to prove the well-posedness of \eqref{eq:feafilt}
together with an energy estimate, which can be seen as a discrete version of \eqref{eq:contstab}.

\begin{lemma} \label{lem:stab}
Suppose that $\hat x^{m+1}_{h,j} \in U$, $j=0,J$, and that
$| \hat x^{m+1}_{h,\rho} | >0$ in $I$. 
Then \eqref{eq:fea} has a unique solution $\bar x^{m+1}_h \in \Vh$ and the update $x^{m+1}_h$ from \eqref{eq:update} satisfies
\begin{align} 
& 4 \Delta t \left(|  D_t x^{m+1}_h |^2, | \hat x^{m+1}_{h,\rho} |^2 \right)^h + 
| x^{m+1}_h |_{1}^2 + | \hat x^{m+2}_h  |_{1}^2 + | x^{m+1}_h - x^m_h |_{1}^2  \nonumber \\ & \qquad 
\leq   | x^{m}_h |_{1}^2 + | \hat x^{m+1}_h  |_{1}^2 + | x^{m}_h - x^{m-1}_h |_{1}^2. \label{eq:stab}
\end{align}
\end{lemma}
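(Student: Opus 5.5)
The proof has two parts: well-posedness of the linear problem \eqref{eq:fea}, and then the energy estimate \eqref{eq:stab} obtained by testing the equivalent form \eqref{eq:fea1} with $D_t x^{m+1}_h$.

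\emph{Well-posedness.} Problem \eqref{eq:fea} is a square linear system. Write $\bar x^{m+1}_h = x^m_h + \Delta t\, v_h$ with $v_h \in \Vhpartial(\hat x^{m+1}_h)$. The hypothesis $\hat x^{m+1}_{h,j}\in U$ for $j=0,J$ guarantees that $\nabla F(\hat x^{m+1}_h)$ is defined at the endpoints, so $\Vhpartial(\hat x^{m+1}_h)$ is a well-defined linear subspace of $\Vh$. Hence \eqref{eq:fea} reads: find $v_h$ in this subspace such that
\[
a(v_h,\eta_h):=\left(v_h,\eta_h|\hat x^{m+1}_{h,\rho}|^2\right)^h+\Delta t\,(v_{h,\rho},\eta_{h,\rho}) = -(x^m_{h,\rho},\eta_{h,\rho})
\]
for all $\eta_h$ in the same subspace. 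Trial and test spaces coincide, so it suffices to show uniqueness, i.e.\ that $a(v_h,v_h)=0$ forces $v_h=0$. Suppose $a(v_h,v_h)=0$. Then $v_{h,\rho}=0$, so $v_h$ is constant. Moreover the lumped term vanishes. Since $\hat x^{m+1}_{h,\rho}$ is piecewise constant and nonzero on every element, the one-sided values at the nodes are positive, so each nodal value $|v_h(\rho_j)|^2$ carries a positive weight. Hence $v_h=0$ at all nodes, and so $v_h=0$.

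\emph{Energy estimate.} First note that \eqref{eq:update} is equivalent to $D_t x^{m+1}_h = (\bar x^{m+1}_h - x^m_h)/\Delta t$ and $\tilde x^{m+1}_h=\bar x^{m+1}_h$. Therefore $D_t x^{m+1}_h\in\Vhpartial(\hat x^{m+1}_h)$ is admissible as a test function in \eqref{eq:fea1}. Choosing $\eta_h = D_t x^{m+1}_h$ gives
\[
\left(|D_t x^{m+1}_h|^2,|\hat x^{m+1}_{h,\rho}|^2\right)^h + \left(\tilde x^{m+1}_{h,\rho}, D_t x^{m+1}_{h,\rho}\right)=0.
\]
For the second term apply \eqref{eq:disctime3} of Lemma~\ref{lem:disctime}, in the bilinear-form version with $(\cdot_\rho,\cdot_\rho)$ and $f=x_h$. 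This gives $\frac{1}{4\Delta t}$ times the difference of the quantities appearing in \eqref{eq:stab}, plus $\frac{3}{4\Delta t}|x^{m+1}_h-2x^m_h+x^{m-1}_h|_1^2\ge 0$. Here one uses $D_t f_\rho=(D_tf)_\rho$ and $\hat f_\rho = \widehat{f_\rho}$, which hold by linearity. Multiplying by $4\Delta t$ and dropping the nonnegative second-difference term yields \eqref{eq:stab}.

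The only delicate point is the definiteness argument in the uniqueness step. It relies on the mass-lumped product and on $|\hat x^{m+1}_{h,\rho}|>0$ on every element, which is exactly the hypothesis of the lemma. The rest is the algebraic identity already provided by Lemma~\ref{lem:disctime}.
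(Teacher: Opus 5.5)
Your proposal is correct and follows essentially the same route as the paper. Existence comes from uniqueness for the square linear system, shown by testing the homogeneous problem with its own solution and using positivity of the lumped weighted mass term. The estimate \eqref{eq:stab} comes from testing \eqref{eq:fea1} with $D_t x^{m+1}_h$ and applying \eqref{eq:disctime3} to the symmetric bilinear form $(u_\rho,v_\rho)$. Your explicit check that $\tilde x^{m+1}_h=\bar x^{m+1}_h$ and $D_t x^{m+1}_h=(\bar x^{m+1}_h-x^m_h)/\Delta t$ fills in a detail the paper leaves implicit.
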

\begin{proof} 
As \eqref{eq:fea} is a linear system with the same number of equations as
unknowns, existence follows from uniqueness. To show the latter, we
need to prove that the homogeneous system has only the trivial solution. Hence
let $\bar x_h \in \Vhpartial(\hat x^{m+1}_h)$ be such that
\begin{align*}
& \frac{1}{\Delta t}  \left( \bar x_h, \eta_h 
|\hat x^{m+1}_{h,\rho}|^2 \right)^h
+ \left(\bar x_{h,\rho} , \eta_{h,\rho} \right) = 0
\qquad \forall \eta_h \in \Vhpartial(\hat x^{m+1}_h). 
\end{align*}
Setting $\eta_h = \bar x_h$ we obtain
\begin{align*}
& \frac{1}{\Delta t}  \left( | \bar x_h |^2,
|\hat x^{m+1}_{h,\rho}|^2 \right)^h + | \bar x_h |_1^2  = 0,
\end{align*}
which implies that $\bar x_h=0$, as required. 
The estimate \eqref{eq:stab} follows by 
testing the equivalent formulation \eqref{eq:fea1} with 
$\eta_h = D_t x^{m+1}_h$ and using \eqref{eq:disctime3}.
\end{proof}

Our main result is the following optimal error estimate.
\begin{theorem} \label{thm:main}
Suppose that \eqref{eq:pde}
has a smooth solution on the time interval $[0,T]$ satisfying
\begin{equation} \label{eq:regul}
c_0 \leq | x_\rho | \leq C_0 \quad \mbox{ in } I \times [0,T]
\end{equation}
for some constants $c_0, C_0 \in \bRplus$. 
Let $x^0_h=I_h x_0$ and assume that $x^1_h \in \Vh$ is such that
\begin{equation} \label{eq:este1}
\| I_h x (\cdot,t_1) - x^1_h \|_{1}^2 \leq c(h^4 + (\Delta t)^4).
\end{equation}
Then there exist $h_0 > 0, \gamma>0$ such that if $0 < h \leq h_0$ and
$\Delta t \leq \gamma h^{\frac12}$,
then \eqref{eq:feafilt} has a unique 
solution $(x^{m}_h)_{m=2,\ldots,M}$, and the following error bounds hold:
\begin{subequations} \label{eq:h1h2}
\begin{align}
\max_{0 \leq m \leq M} \| x(\cdot,t_m) - x^m_h \|_0^2 
& \leq c \bigl( h^4 + (\Delta t)^4 \bigr);  \label{eq:h1} \\
 \max_{0 \leq m \leq M} | x(\cdot,t_m) - x^m_h |_1^2  & \leq c \bigl( h^2 + (\Delta t)^4 \bigr).  \label{eq:h2} 
\end{align}
\end{subequations}
\end{theorem}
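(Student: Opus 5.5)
We argue by induction over the time levels, following \cite{DeckelnickE98,csftime}. Throughout we regard \eqref{eq:fea1} as a finite difference scheme at the nodes $\rho_j$. We work with the nodal error $e^m = x^m - x^m_h$, where $x^m = I_h x(\cdot,t_m)$ is the nodal interpolant; the $\|\cdot\|_0$ and $|\cdot|_1$ bounds for $x(\cdot,t_m) - I_h x(\cdot,t_m)$ are standard.

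The induction hypothesis at level $m$ is
\[
\| e^k\|_0^2 \leq C_*(h^4+(\Delta t)^4) \quad \text{and} \quad | e^k|_1^2 \leq C_*(h^2+(\Delta t)^4) \quad \text{for } k \leq m.
\]
Combined with an inverse estimate and $\Delta t\le\gamma h^{1/2}$, this gives $\|e^k_\rho\|_{L^\infty}\le c\,h^{-1/2}(h+(\Delta t)^2)$, which is small. Hence $|\hat x^{m+1}_{h,\rho}|\ge c_0/2$ and the boundary values of $\hat x^{m+1}_h$ lie in $U$. Lemma~\ref{lem:stab} then yields existence and uniqueness of $x^{m+1}_h$.

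\textbf{Consistency.} We insert the nodal values of $x$ into \eqref{eq:fea1}. At interior nodes the mass-lumped identity reads $|\hat x^{m+1}_{h,\rho}|^2 D_t x_h = \delta^2 \tilde x_h$ in difference form. The exact solution satisfies this up to a truncation error $O(h^2+(\Delta t)^2)$. Three facts give this: $D_t$ is the BDF2 difference quotient, $\tilde f^{m+1} = f^{m+1} + O((\Delta t)^2)$ because $\tfrac32-1+\tfrac12=1$ and the first moment vanishes, and $\hat f^{m+1}$ extrapolates with error $O((\Delta t)^2)$. At the boundary nodes the discrete equation contains a half-cell mass term and a one-sided difference $x_\rho$. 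We use \eqref{eq:pdec} and a Taylor expansion, $h^{-1}(x_1-x_0) = x_\rho(0) + \tfrac h2 x_{\rho\rho}(0) + O(h^2)$, together with \eqref{eq:csf}. This shows the boundary residual is $O(h^2+(\Delta t)^2)$ after projecting with $P(\hat x^{m+1})$.

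\textbf{Error equation and energy argument.} Subtracting the scheme from the consistency relation gives an equation for $e^{m+1}$. We test it with $\eta_h = D_t e^{m+1}$, corrected at the two endpoints so that it lies in $\Vhpartial(\hat x^{m+1}_h)$. Indeed, $D_t x^{m+1}\cdot\nabla F(\hat x^{m+1}_h)$ need not vanish. However, $D_t x^{m+1}\cdot\nabla F(x^{m+1})=O((\Delta t)^2)$ by \eqref{eq:pdeb}, so the mismatch is
\[
D_t x^{m+1}\cdot\bigl(\nabla F(x^{m+1}) - \nabla F(\hat x^{m+1}_h)\bigr) + O((\Delta t)^2).
\]
Applying \eqref{eq:disctime3} to the symmetric form $(\cdot_\rho,\cdot_\rho)$ gives a telescoping quantity
\[
\mathcal E^{m+1} = |e^{m+1}|_1^2 + |\hat e^{m+2}|_1^2 + |e^{m+1}-e^m|_1^2,
\]
plus the dissipation term $\Delta t\,(|D_t e^{m+1}|^2,|\hat x^{m+1}_{h,\rho}|^2)^h$. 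The nonlinear coefficient difference $|\hat x_{h,\rho}|^2-|\hat x_\rho|^2$ multiplied by $D_t x$ is handled with $L^\infty$ bounds on $D_t x$ and Young's inequality. It yields $c\,|\hat e^{m+1}|_1^2 + \epsilon\|D_t e^{m+1}\|^2$. The $L^2$ bound \eqref{eq:h1} is obtained similarly, by also testing with $e^{m+1}$ (via \eqref{eq:disctime1}). This gives the dissipation $\Delta t\,|\tilde e^{m+1}|_1^2$-type terms needed for an $h^4$ bound. A discrete Gronwall argument then closes the induction for $h\le h_0$, with $C_*$ chosen from \eqref{eq:este1}.

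\textbf{Main obstacle: the boundary terms.} The nonlinear constraint $\eta\cdot\nabla F(\hat x_h^{m+1})=0$ produces terms like
\[
\bigl(\nabla F(x^{m+1})-\nabla F(\hat x^{m+1}_h)\bigr)\cdot D_t x^{m+1}\; \tilde x^{m+1}_\rho
\]
at $\rho\in\partial I$. These are only controlled by $|e|$ at the endpoint times $1/\Delta t$, which is too weak. Following \cite{DeckelnickE98}, I would rewrite these terms as exact discrete time derivatives. Using the structure $\nabla F(x)\cdot D_t x = 0$, the combination $\bigl(D^2F(x)\hat e\bigr)\cdot D_t x$ can be written as
\[
D_t\Bigl[\tfrac12 D^2F\,(e,e)\text{-type expression}\Bigr] + \text{lower order},
\]
using the BDF2 identities \eqref{eq:disctime1}--\eqref{eq:disctime2} applied to the boundary quadratic form. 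This is exactly why BDF2 rather than Crank--Nicolson is used. After summation in time, the boundary trace terms are bounded by $\epsilon|e|_1^2+c_\epsilon\|e\|_0^2$ via the trace inequality $|e(0)|^2\le \epsilon|e|_1^2+c_\epsilon\|e\|_0^2$. The remaining $(\Delta t)^2$ boundary consistency errors are absorbed the same way. This step, in particular verifying that all boundary contributions sum telescopically to second order, is where I expect most of the work.
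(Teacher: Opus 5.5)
Your skeleton is the paper's: finite-difference form, nodal error, the identities of Lemma~\ref{lem:disctime}, boundary terms rewritten as discrete time derivatives and absorbed by a trace inequality, then Gronwall. The boundary step, however, is the heart of the proof, you leave it open, and the sketch you give misdiagnoses it; as written it would not close. The quantity you want to telescope, $(D^2F(x)\hat e)\cdot D_t x$, is linear in the error. So it cannot be a discrete time derivative of a quadratic form $D^2F(e,e)$ up to lower order, and an $O(|e_J|)$ term with $O(1)$ coefficient could never be absorbed. In fact no such term arises. The normal mismatch you computed, $D_t e^{m+1}_J\cdot\hat d^{m+1}_J=O(|\hat e^{m+1}_J|+(\Delta t)^2)$ (this is \eqref{eq:edif}), multiplies the error quantity $\delta^-\tilde e^{m+1}_J\cdot\hat d^{m+1}_J$. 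That product needs no telescoping, only the pointwise bound \eqref{eq:dsi} in terms of $|\tilde e^{m+1}_h|_{1,h}$ and $|\tilde e^{m+1}_h|_{2,h}$. The paper gets $|\tilde e^{m+1}_h|_{2,h}^2$ as dissipation by testing with $-\delta^2\tilde e^{m+1}$, cf.\ \eqref{eq:erra3}. With your test function $D_t e^{m+1}$ you would have to extract it from the interior difference equation, which your sketch does not do.

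The term that really forces the telescoping comes from the tangential (natural) condition \eqref{eq:DE98d}, which is imposed with $P(\hat x^{m+1}_{h,J})$ instead of $P(x^{m+1}_J)$.
\begin{itemize}
\item \emph{The term.} By \eqref{eq:difnormal}, $(P(\hat x^{m+1}_{h,J})-P(x^{m+1}_J))x^{m+1}_\rho(\rho_J)$ equals $-(x^{m+1}_\rho(\rho_J)\cdot\nabla F(x^{m+1}_J))A(x^{m+1}_J)(\hat e^{m+1}_J-(\Delta t)^2x^m_{tt}(\rho_J))$ up to quadratic remainders. It is paired with the point value $\hat P^{m+1}_J D_t e^{m+1}_J$. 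So is the consistency term $a^{m+1}_J$ of \eqref{eq:defam}, which also contains $\tfrac16 h^2 x_{\rho\rho\rho}$, not only $(\Delta t)^2$ terms.
\item \emph{Why Young fails.} The only control on $D_t e^{m+1}_J$ is the boundary dissipation $\tfrac12 h(\hat q^{m+1}_{h,J})^2|\hat P^{m+1}_J D_t e^{m+1}_J|^2$ from the half-cell mass term in \eqref{eq:DE98d}. Young's inequality would therefore cost a factor $h^{-1}$.
\item \emph{The fix.} Apply \eqref{eq:disctime2} to the symmetric form $(v,w)\mapsto Av\cdot w$. This gives $-\frac{1}{\Delta t}(G^{m+1}_J-G^m_J)$ with $G$ from \eqref{eq:defGm}, plus a bad remainder $\frac{c}{\Delta t}|e^{m+1}_J-2e^m_J+e^{m-1}_J|^2$.
\item \emph{Absorbing the remainder.} This works only because of the extra positive terms $|e^{m+1}_h-2e^m_h+e^{m-1}_h|_{1,h}^2$ from \eqref{eq:disctime3} and, with weight $K$, $|e^{m+1}_h-2e^m_h+e^{m-1}_h|_{0,h}^2$ from \eqref{eq:disctime1}, combined via \eqref{eq:dsi0}. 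Leftovers such as $h^{-1}(\Delta t)^6$ are where $\Delta t\le\gamma h^{1/2}$ is used again.
\end{itemize}

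Finally, the paper runs no separate $L^2$ test, and $e^{m+1}$ is not an admissible test function anyway. It carries $H^1$ and $L^2$ in one functional $E^m$ and proves the superconvergent nodal bound \eqref{eq:fdh1}. That bound, not $|e^k|_1=O(h)$, is what your induction should propagate. The $L^2$ part, which is needed to control the boundary values in $G^m_0,G^m_J$, is handled by bounding $4K\Delta t(e^{m+1}_h,D_te^{m+1}_h)^h$ with the $|D_te^{m+1}_h|_{0,h}$ dissipation.
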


\begin{remark} \label{rem:x1}
In practice, appropriate initial data $x^1_h$ satisfying \eqref{eq:este1} can,
for example, be obtained by using the backward Euler scheme from
\cite{DeckelnickE98} for $M_1 = \lceil \frac1{\Delta t} \rceil$ time steps 
with the artificial time step size $\Delta t / M_1$. Of course, for small
$\Delta t$ this soon becomes impractical.
An alternative is proposed in
Appendix~\ref{sec:AppA}, which requires the solution of only a single linear
system of equations.
\end{remark}

\setcounter{equation}{0}
\section{Proof of Theorem~\ref{thm:main}} \label{sec:ana}

From now on, and without loss of generality, we assume that $U$ and $F$ are
chosen such that
\begin{equation} \label{eq:gradF}
 |\nabla\,F(z)| =1 \quad \forall z \in \partial \Omega, \quad | \nabla F(z)| \geq \tfrac12 \quad \forall z \in U.
\end{equation}
We can therefore extend the definition of the projection operator $P$ to all of
$U$. In particular, 
we define the matrix valued functions $P, A: U \to \bR^{n \times n}$ by
\begin{equation} \label{eq:defPA}
P(z):= \Id - \frac{\nabla F(z)}{| \nabla F(z) |} \otimes \frac{\nabla F(z)}{| \nabla F(z) |}, \quad A(z):= \frac{1}{| \nabla F(z)|} P(z) D^2 F(z) P(z).
\end{equation}
By differentiating the condition $|\nabla F(z) | =1$ on $\partial \Omega$ 
in tangential direction we have
\begin{displaymath}
 P(z)  D^2 F(z) \nabla F(z) =0 \quad \forall z \in \partial \Omega.
\end{displaymath}
Let us fix $z \in \partial \Omega$ and $p \in B_\epsilon(z) \subset U$. Using the above identity together with the relation
$p-z= P(z)(p-z)+ ((p-z) \cdot \nabla F(z)) \nabla F(z)$, we obtain
\begin{align}
&
\frac{\nabla F(p)}{| \nabla F(p) |} - \nabla F(z) = \frac{\nabla F(p)}{| \nabla F(p) |} - \frac{\nabla F(z)}{| \nabla F(z) |} 
 = \frac{1}{| \nabla F(z)|} P(z) D^2 F(z)(p-z) + \mathcal O(|p-z|^2) \nonumber  \\ & \qquad
= \frac{1}{| \nabla F(z)|} P(z) D^2 F(z) P(z)(p-z) + \mathcal O(|p-z|^2) = A(z) (p-z) + \mathcal O(|p-z|^2). \label{eq:difnormal}
\end{align}

As a large part of the error analysis is concerned with handling the boundary conditions, it is convenient to interpret the scheme as a finite difference method.
To do so,  we view a function $v  \in \Vh$ as a grid function on $\mathcal G_h=\lbrace \rho_0, \rho_1,\ldots, \rho_J \rbrace$.
Setting $v_j=v(\rho_j)$ we introduce the finite difference operators
\begin{displaymath}
\delta^- v_j= \frac{v_j - v_{j-1}}{h}, \quad 
\delta^+ v_j= \frac{v_{j+1} - v_j}{h}, \quad 
\delta^2 v_j= \frac{\delta^+v_j - \delta^- v_j}{h}.
\end{displaymath}
For two grid functions $v,w: \mathcal G_h \to \bR^n$ one has the following summation by parts formula:
\begin{equation} \label{eq:sbp}
h \sum_{j=1}^J \delta^- v_j \cdot \delta^- w_j = - h \sum_{j=1}^{J-1} v_j \cdot \delta^2 w_j
+ \delta^- w_J \cdot v_J - \delta^+ w_0 \cdot v_0.
\end{equation}
In addition, we introduce the following discrete norms and seminorms
\begin{align} \label{eq:norms}
& |  v |_{0,h}^2 := \tfrac12 h |  v_0|^2 
+ h \sum_{j=1}^{J-1} |  v_j|^2 + \tfrac12 h |  v_J|^2; \quad
|  v |_{1,h}^2 := h \sum_{j=1}^J |\delta^-  v_j|^2; \nonumber \\
& \|  v \|_{1,h}^2 := |  v |_{0,h}^2 + |  v |_{1,h}^2; \quad
|  v |_{2,h}^2 := h \sum_{j=1}^{J-1} |\delta^2  v_j|^2.
\end{align}

We have the following lemma.
\begin{lemma} \label{lem:dsi}
Let $v: \mathcal G_h \to \bR^n$ be an arbitrary grid function. 
Then
\begin{subequations} \label{eq:lem21}
\begin{align} 
| v |_{0,h} \leq
\max_{0 \leq k \leq J} | v_k | & \leq |v_0| + | v |_{1,h}, 
\label{eq:dpi} \\
\max_{1\leq k \leq J} |\delta^- v_k| & \leq h^{-\frac12}
| v |_{1,h}, \label{eq:inv1} \\
\max_{0\leq k \leq J} |  v_k|^2 &
\leq |  v |_{0,h}^2 + 2 |  v |_{0,h} |  v |_{1,h}, \label{eq:dsi0} \\
\max_{1\leq k \leq J} |\delta^-  v_k|^2 &
\leq |  v |_{1,h}^2 + 2 |  v |_{1,h} |  v |_{2,h}. \label{eq:dsi}
\end{align}
\end{subequations}
\end{lemma}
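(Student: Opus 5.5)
All four inequalities are discrete analogues of elementary one-dimensional Sobolev inequalities. I would prove them by telescoping sums combined with Cauchy--Schwarz, using only the definitions in \eqref{eq:norms}.

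\textbf{Proof of \eqref{eq:dpi}.} For the first inequality, note that the weights in $|v|_{0,h}^2$ sum to $h(\tfrac12+(J-1)+\tfrac12)=hJ=1$. Hence $|v|_{0,h}^2\le \max_k|v_k|^2$.

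For the second inequality, fix $k$ and write $v_k=v_0+h\sum_{j=1}^k \delta^- v_j$. Cauchy--Schwarz then gives
\[
\Bigl|h\sum_{j=1}^k\delta^- v_j\Bigr|\le (kh)^{\frac12}\Bigl(h\sum_{j=1}^J|\delta^- v_j|^2\Bigr)^{\frac12}\le |v|_{1,h},
\]
since $kh\le 1$.

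\textbf{Proof of \eqref{eq:inv1}.} This is immediate: a single term satisfies $h|\delta^- v_k|^2\le |v|_{1,h}^2$.

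\textbf{Proof of \eqref{eq:dsi0}.} I would use the identity
\[
|v_k|^2-|v_l|^2=h\sum_{j=l+1}^{k}\delta^-(|v|^2)_j=h\sum_{j=l+1}^k (v_j+v_{j-1})\cdot\delta^- v_j .
\]
Then I would average over the base index $l$ with the trapezoidal weights $\tfrac12h, h,\ldots,h,\tfrac12 h$, which sum to $1$. This gives
\[
|v_k|^2=|v|_{0,h}^2+\sum_l w_l\bigl(|v_k|^2-|v_l|^2\bigr),
\]
where $w_l$ denotes these weights. Each difference is bounded by $h\sum_{j=1}^J|v_j+v_{j-1}|\,|\delta^- v_j|$, which by Cauchy--Schwarz is at most
\[
\Bigl(h\sum_{j=1}^J |v_j+v_{j-1}|^2\Bigr)^{\frac12}|v|_{1,h}.
\]
Finally, $|v_j+v_{j-1}|^2\le 2|v_j|^2+2|v_{j-1}|^2$, and summing yields $h\sum_j|v_j+v_{j-1}|^2\le 4|v|_{0,h}^2$. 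Indeed, each interior node appears twice with weight $2h$, giving $4h$, and each endpoint appears once with weight $2h=4\cdot\tfrac12 h$. So the bound is $2|v|_{0,h}|v|_{1,h}$, as claimed.

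\textbf{Proof of \eqref{eq:dsi}.} This is the same argument applied to the grid function $w_j=\delta^- v_j$, $j=1,\ldots,J$. This function lives on $J$ nodes and has differences $w_{j+1}-w_j=h\,\delta^2 v_j$ for $j=1,\ldots,J-1$. Here I would average the base index uniformly with weight $h$ over $l=1,\ldots,J$, and these weights also sum to $1$. This gives
\[
|w_k|^2\le h\sum_{l=1}^J|w_l|^2+\max_{k,l}\bigl||w_k|^2-|w_l|^2\bigr|,
\]
where the first term equals $|v|_{1,h}^2$. The differences are bounded by
\[
h\sum_{j=1}^{J-1}|w_{j+1}+w_j|\,|\delta^2 v_j|\le \Bigl(h\sum_{j=1}^{J-1}|w_{j+1}+w_j|^2\Bigr)^{\frac12}|v|_{2,h}.
\]

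\textbf{Main obstacle.} The only delicate point is getting the constant exactly $2$ in \eqref{eq:dsi}. The crude estimate $h\sum_{j=1}^{J-1}|w_{j+1}+w_j|^2\le 4h\sum_{j=1}^J|w_j|^2$ only yields the constant $2$ if the factor $4$ is absorbed correctly. Since $\sum_{j=1}^{J-1}(2|w_{j+1}|^2+2|w_j|^2)\le 4\sum_{j=1}^J|w_j|^2$, we get $(4|v|_{1,h}^2)^{1/2}=2|v|_{1,h}$, and the constant is fine. I would double-check this bookkeeping at the endpoints, and likewise the endpoint weights in \eqref{eq:dsi0}.
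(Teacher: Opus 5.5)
Your proof is correct. For \eqref{eq:dpi} it is the paper's argument: the telescoping identity $v_k=v_0+h\sum_{j\le k}\delta^-v_j$ plus Cauchy--Schwarz with $hJ=1$.

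For \eqref{eq:inv1}, \eqref{eq:dsi0} and \eqref{eq:dsi} the paper gives no argument and simply cites Lemma~2.2 of \cite{degenD}. You supply a self-contained proof instead. It uses the discrete product rule $|v_j|^2-|v_{j-1}|^2=h\,(v_j+v_{j-1})\cdot\delta^-v_j$ and then averages the base point with weights summing to one. These are trapezoidal weights for $v$, and uniform weights $h$ on $j=1,\ldots,J$ for $\delta^-v$, together with $\delta^-v_{j+1}-\delta^-v_j=h\,\delta^2v_j$. This is the discrete analogue of the inequality $\|f\|_{L^\infty}^2\le\|f\|_{L^2}^2+2\|f\|_{L^2}\|f'\|_{L^2}$ on an interval of length one.

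Your endpoint bookkeeping is right: $h\sum_{j=1}^J|v_j+v_{j-1}|^2\le 4|v|_{0,h}^2$, with equality in the weight count, and $h\sum_{j=1}^{J-1}|\delta^-v_{j+1}+\delta^-v_j|^2\le 4|v|_{1,h}^2$, with slack only at the ends. So the constant $2$ comes out exactly, and the point you flagged as the main obstacle is not an issue.

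The citation is shorter. Your version makes explicit that only $hJ=1$ and the specific weights in \eqref{eq:norms} are used. It also shows the estimates hold for arbitrary grid functions, and in particular for periodic ones ($v_J=v_0$, closed curves), which the paper also needs.
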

\begin{proof} 
The first inequality in \eqref{eq:dpi} is trivial. In addition,
on noting that $v_k = v_0 + h\sum_{j=1}^k \delta^- v_j$
and using the Cauchy--Schwarz inequality, it holds that
\begin{equation*}
|v_k| \leq |v_0| + h \sum_{j=1}^k |\delta^- v_j| 
\leq |v_0| + h \sum_{j=1}^J |\delta^- v_j| \leq |v_0| + | v |_{1,h}, \quad 1 \leq k \leq J.
\end{equation*}
This proves the second inequality in \eqref{eq:dpi}. 
For the proofs of \eqref{eq:inv1}, \eqref{eq:dsi0} and \eqref{eq:dsi}, we refer
to Lemma~2.2 in \cite{degenD}. 
\end{proof}

In this section we will prove that if the assumptions of Theorem~\ref{thm:main}
are satisfied, then
\begin{equation} \label{eq:fdh1}
\max_{0 \leq m \leq M} \| x(\cdot,t_m) - x^m_h \|_{1,h}^2 
\leq c \bigl( h^4 + (\Delta t)^4 \bigr). 
\end{equation}
Clearly \eqref{eq:fdh1} implies a superconvergence result for the error
$\max_{0 \leq m \leq M} | I_h x(\cdot,t_m) - x^m_h |_{1}^2$, as well as the
error bounds \eqref{eq:h1h2}. \\
Let us define the grid functions $e^m_h: \mathcal G_h \to \bR^n$ by
\begin{equation} \label{eq:em} 
e^m_j = e^m_{h,j} =
 e^m_h(\rho_j)= x^m_{h,j} - x^m_j=x^m_{h,j}-x(\rho_j,t_m),  \quad j=0,\ldots,J,
\end{equation}
and set
\begin{equation} \label{eq:hate}
\hat e^{m+1}_h=2 e^m_h - e^{m-1}_h, \quad \tilde e^{m+1}_h=\tfrac32 e^{m+1}_h - e^m_h+ \tfrac12 e^{m-1}_h.
\end{equation}
Let $K>0$ and define for $m \in \lbrace 1,\ldots,M \rbrace$
\begin{align} \label{eq:defEm} 
E^m & := \tfrac{1}{4} \bigl(  | e^m_{h} |^2_{1,h} + | \hat e^{m+1}_h |_{1,h}^2 + | e^m_h - e^{m-1}_h |_{1,h}^2 \bigr) 
+ K \bigl(  | e^m_h |_{0,h}^2 + | \hat e^{m+1}_h |_{0,h}^2 \bigr)   + G^m_0 + G^m_J, 
\end{align}
where $G^m_0 = -(G^m_{0,1} + G^m_{0,2})$
and $G^m_J = G^m_{J,1} + G^m_{J,2}$, with
\begin{subequations} \label{eq:defGm}
\begin{align} \label{eq:defGm1}
G^m_{j,1} & := -    \bigl( x^{m}_\rho(\rho_j) \cdot \nabla F(x^{m}_j) \bigr) \Bigl( \tfrac{3}{4} \bigl(A(x^m_j)  e^m_j \cdot e^m_j \bigr)   - \tfrac{1}{4} \bigl( A(x^m_j)  e^{m-1}_{j} \cdot e^{m-1}_j\bigr)\Bigr) , \\
G^m_{j,2} & :=  a^{m}_j \cdot \bigl(  \tfrac32e^m_j - \tfrac12 e^{m-1}_j \bigr) = a^m_j \cdot \bigl( \tfrac12 e^m_j + \tfrac12 \hat e^{m+1}_j \bigr) , \label{eq:defGm2}
\end{align}
\end{subequations}
with $A$ as defined in \eqref{eq:defPA} and
\begin{align} 
a^{m}_j & := (\Delta t)^2  \bigl( x^{m}_\rho(\rho_j) \cdot \nabla F(x^{m}_j) \bigr) 
 A(x^m_j) x^{m-1}_{tt}(\rho_j)   
 + P(x^m_j) \bigl( \tfrac12(\Delta t)^2 x^{m}_{\rho t t}(\rho_j) + \tfrac16 h^2 x^{m}_{\rho \rho \rho}(\rho_j) \bigr) .\label{eq:defam}
 \end{align}
 The following lemma shows that $E^m$ controls the error in $\| \cdot \|_{1,h}$. 
\begin{lemma} Assume that $K$ satisfies $K \geq 18 K_0^2 +\tfrac32 K_0 + 3$ with $\displaystyle K_0:= \max_{\partial I \times [0,T]} |  x_\rho | \, | A(x)| $, where
$| A| $ denotes the spectral norm of $A$. Then
\begin{equation} \label{eq:Eequiv}
E^m \geq \tfrac{1}{16} \bigl( \| e^m_{h} \|_{1,h}^2 + \| \hat e^{m+1}_h \|_{1,h}^2 \bigr)  - c \bigl( h^4+ (\Delta t)^4 \bigr).
\end{equation}
\end{lemma}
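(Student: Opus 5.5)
The plan is to show that the boundary terms $G^m_0+G^m_J$ are small perturbations that the $K$-weighted $L^2$ terms and part of the $H^1$ seminorm terms can absorb. The only tool needed is the discrete Sobolev inequality \eqref{eq:dsi0}.

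\textbf{Step 1: lower bound for the main part.} Drop the nonnegative term $|e^m_h-e^{m-1}_h|_{1,h}^2$. The main part of $E^m$ is then at least
\[
\tfrac14\bigl(|e^m_h|_{1,h}^2+|\hat e^{m+1}_h|_{1,h}^2\bigr)+K\bigl(|e^m_h|_{0,h}^2+|\hat e^{m+1}_h|_{0,h}^2\bigr).
\]
Since $e^{m-1}_h=2e^m_h-\hat e^{m+1}_h$, every boundary quantity can be expressed through $e^m_j$ and $\hat e^{m+1}_j$ for $j\in\{0,J\}$.

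\textbf{Step 2: the quadratic terms $G^m_{j,1}$.} Here $|x^m_\rho(\rho_j)\cdot\nabla F(x^m_j)|\,|A(x^m_j)|\le K_0$, because $|\nabla F|=1$ on $\partial\Omega$. Hence
\[
|G^m_{j,1}|\le K_0\bigl(\tfrac34|e^m_j|^2+\tfrac14|e^{m-1}_j|^2\bigr)\le K_0\bigl(\tfrac74|e^m_j|^2+\tfrac12|\hat e^{m+1}_j|^2\bigr)\le \tfrac74K_0\bigl(|e^m_j|^2+|\hat e^{m+1}_j|^2\bigr),
\]
using $|e^{m-1}_j|^2\le 2(4|e^m_j|^2+|\hat e^{m+1}_j|^2)$. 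By \eqref{eq:dsi0} and Young's inequality, for $v\in\{e^m_h,\hat e^{m+1}_h\}$,
\[
|v_j|^2\le |v|_{0,h}^2+2|v|_{0,h}|v|_{1,h}\le (1+\delta^{-1})|v|_{0,h}^2+\delta|v|_{1,h}^2 .
\]
Choose $\delta$ so that the combined contribution from both endpoints $j=0,J$ uses at most $\tfrac18|v|_{1,h}^2$. For example, $2\cdot\tfrac74K_0\,\delta\le\tfrac1{16}$ gives $\delta\approx 1/(56K_0)$. The resulting $L^2$ coefficient is then of size $cK_0+cK_0^2$, which is exactly why $K$ must dominate a quadratic expression in $K_0$. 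I expect the stated threshold $18K_0^2+\tfrac32K_0+3$ to come out of this bookkeeping, possibly with a sharper split between the $\tfrac34$ and $\tfrac14$ pieces.

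\textbf{Step 3: the linear terms $G^m_{j,2}$.} By smoothness of $x$ and of $F$, $|a^m_j|\le c(h^2+(\Delta t)^2)$. Then
\[
|G^m_{j,2}|\le \tfrac12|a^m_j|\bigl(|e^m_j|+|\hat e^{m+1}_j|\bigr)\le |a^m_j|^2+\tfrac18\bigl(|e^m_j|^2+|\hat e^{m+1}_j|^2\bigr).
\]
The first term is at most $c(h^4+(\Delta t)^4)$. The second is handled again by \eqref{eq:dsi0} with a small $\delta$, consuming another $\tfrac1{16}$ of the $H^1$ seminorms and a constant amount of $L^2$ weight. This constant amount is accounted for by the $+3$ in the condition on $K$.

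\textbf{Step 4: collect terms.} After absorption, the $H^1$ coefficient remains at least $\tfrac14-\tfrac18-\tfrac1{16}=\tfrac1{16}$. The $L^2$ coefficient is at least $K$ minus the consumed amounts, which is $\ge \tfrac1{16}$ under the assumed bound on $K$. This yields \eqref{eq:Eequiv}.

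The main obstacle is the constant bookkeeping in Step 2: choosing the Young parameters so that the exact threshold on $K$ results. Structurally, however, the argument is just \eqref{eq:dsi0} plus Young's inequality.
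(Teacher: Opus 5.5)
Your overall strategy is the paper's: bound $G^m_0+G^m_J$ pointwise, pass from endpoint values to norms via \eqref{eq:dsi0} and Young's inequality, and absorb into $\tfrac14$ of the seminorms and the $K$-weighted $L^2$ terms. Your Steps 1, 3 and 4 are fine. The gap is in Step 2.

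\emph{The arithmetic is wrong.} From $|e^{m-1}_j|^2\le 2(4|e^m_j|^2+|\hat e^{m+1}_j|^2)$ you get
\[
\tfrac34|e^m_j|^2+\tfrac14|e^{m-1}_j|^2\le \tfrac{11}{4}|e^m_j|^2+\tfrac12|\hat e^{m+1}_j|^2,
\]
not $\tfrac74|e^m_j|^2+\tfrac12|\hat e^{m+1}_j|^2$. For example, take $\hat e^{m+1}_j=-e^m_j$, i.e.\ $e^{m-1}_j=3e^m_j$: the left side is $3|e^m_j|^2$, while your middle expression is only $\tfrac94|e^m_j|^2$.

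\emph{More importantly, the stated threshold on $K$ cannot be reached this way.} If you bound $\tfrac34 Ae^m_j\cdot e^m_j$ and $\tfrac14 Ae^{m-1}_j\cdot e^{m-1}_j$ separately in absolute value, the coefficient $\beta$ of $|e^m_j|^2$ is at least $\tfrac34+1=\tfrac74$, because $\tfrac14|2e^m_j-\hat e^{m+1}_j|^2$ already contains $|e^m_j|^2$. Summing over both endpoints and using \eqref{eq:dsi0} produces a cross term $4\beta K_0|e^m_h|_{0,h}|e^m_h|_{1,h}$. Absorbing it into $\theta|e^m_h|_{1,h}^2$ costs $4\beta^2K_0^2\theta^{-1}|e^m_h|_{0,h}^2$. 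With your budget $\theta=\tfrac18$ this is $98K_0^2$. Even if you spend the whole budget $\theta=\tfrac3{16}$ left after Step 1, it is still about $65K_0^2$. Either way it exceeds the $18K_0^2$ allowed by the hypothesis, unless $K_0$ happens to be very small.

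So within your scheme you prove \eqref{eq:Eequiv} only for $K\ge cK_0^2+\dots$ with a much larger $c$. That would suffice for the later induction, where $K$ is enlarged anyway, but it is not the lemma as stated.

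\emph{The missing idea} is to substitute $e^{m-1}_j=2e^m_j-\hat e^{m+1}_j$ before taking absolute values and expand exactly. For symmetric $A$,
\[
\tfrac34 Av\cdot v-\tfrac14 Aw\cdot w=-\tfrac14 Av\cdot v-\tfrac14 A(2v-w)\cdot(2v-w)+Av\cdot(2v-w).
\]
Here $\tfrac34 Av\cdot v$ cancels against part of $\tfrac14 Aw\cdot w$. With $v=e^m_j$ and $2v-w=\hat e^{m+1}_j$ this gives
\[
|G^m_{j,1}|\le K_0\bigl(\tfrac14|e^m_j|^2+\tfrac14|\hat e^{m+1}_j|^2+|e^m_j|\,|\hat e^{m+1}_j|\bigr)\le\tfrac34K_0\bigl(|e^m_j|^2+|\hat e^{m+1}_j|^2\bigr).
\]
Now apply \eqref{eq:dsi0} together with $\tfrac32K_0|v|_{0,h}|v|_{1,h}\le\tfrac1{16}|v|_{1,h}^2+9K_0^2|v|_{0,h}^2$. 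Per endpoint this costs $\tfrac1{16}$ of each seminorm and $9K_0^2+\tfrac34K_0$ of each $L^2$ norm, so $\tfrac18$ and $18K_0^2+\tfrac32K_0$ in total. Your Step 3 costs a further $\tfrac1{16}$ and $\tfrac54$. You are then left with $\tfrac1{16}$ of the seminorms and $K-18K_0^2-\tfrac32K_0-\tfrac54\ge\tfrac74$ of the $L^2$ norms, which gives \eqref{eq:Eequiv} under exactly the stated assumption on $K$.
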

\begin{proof} 
Observing that for a symmetric matrix $A$ and $v,w \in \bR^n$ we have
\begin{displaymath}
\tfrac34 Av \cdot  v - \tfrac14 A w \cdot w = - \tfrac14 Av \cdot v - \tfrac14 A(2v-w) \cdot (2v-w) + Av \cdot ( 2v -w),
\end{displaymath}
we find with the help of \eqref{eq:dsi0} and \eqref{eq:hate}
\begin{align*}
| G^m_{j,1} | & \leq K_0 \bigl( \tfrac14 | e^m_j |^2 + \tfrac14 |  \hat e^{m+1}_j  |^2 + | e^m_j | |  \hat e^{m+1}_j  | \bigr) \leq \tfrac34 K_0 \bigl(
 | e^m_j |^2 +  |  \hat e^{m+1}_j  |^2 \bigr) \\
& \leq \tfrac34 K_0 \bigl( | e^m_h |_{0,h}^2 + | \hat e^{m+1}_h  |_{0,h}^2 + 2 | e^m_h |_{0,h} | e^m_h |_{1,h} +2 |  \hat e^{m+1}_h  |_{0,h} |  \hat e^{m+1}_h  |_{1,h} \bigr) \\
& \leq \tfrac{1}{16} \bigl(  | e^m_h |_{1,h}^2 + |  \hat e^{m+1}_h  |_{1,h}^2 \bigr) + ( 9 K_0^2+ \tfrac34 K_0) \bigl(  | e^m_h |_{0,h}^2 + | \hat e^{m+1}_h  |_{0,h}^2 \bigr),
\end{align*}
while 
\[
| G^m_{j,2} | \leq c \bigl( h^2 + (\Delta t)^2 \bigr) \bigl( | e^m_j| + |   \hat e^{m+1}_j  | \bigr) \leq  \tfrac{1}{32} \bigl(  | e^m_h |_{1,h}^2 + | \hat e^{m+1}_h  |_{1,h}^2 \bigr)  
+ | e^m_h |_{0,h}^2 + | \hat e^{m+1}_h |_{0,h}^2 + c \bigl( h^4 + (\Delta t)^4 \bigr). 
\]
Hence
\[
| G^m_0| + | G^m_J | \leq \tfrac{3}{16}  \bigl(  | e^m_h |_{1,h}^2 + | \hat e^{m+1}_h  |_{1,h}^2 \bigr) + (18 K_0^2+ \tfrac32 K_0+2) \bigl(  | e^m_h |_{0,h}^2 + | \hat e^{m+1}_h  |_{0,h}^2 \bigr)+  c \bigl( h^4 + (\Delta t)^4 \bigr),
\]
which implies \eqref{eq:Eequiv}, on recalling our assumption on $K$.
\end{proof}

Our aim is to show by induction that $(x^{m}_h)_{m=2,\ldots,M}$
exists uniquely and that $x^m_h$ satisfies
\begin{equation} \label{eq:induction}
E^m \leq \hat c \bigl( h^4 + (\Delta t)^4 \bigr) e^{\mu t_m},
\end{equation}
provided that $0 <h \leq h_0$ and 
$0<\Delta t \leq \gamma h^{\frac12}$, for suitably chosen constants
$\hat c>0, \gamma>0, \mu>0$.
This would prove \eqref{eq:fdh1}, and hence \eqref{eq:h1h2}.
Since $e^0_h=0$, it follows from \eqref{eq:dsi0} and  \eqref{eq:este1} that
\[
E^1 \leq c \| e^1_h \|_{1,h} \Bigl( \| e^1_h \|_{1,h}
+ h^2 + (\Delta t)^2 \Bigr)
\leq c( h^4 + (\Delta t)^4) =: \hat c ( h^4 + (\Delta t)^4) \leq \hat c \bigl( h^4 + (\Delta t)^4 \bigr) e^{\mu t_1},
\]
so that the assertion holds for $m=1$. 
Suppose next that $x^m_h \in \Vh$ exists and \eqref{eq:induction} is satisfied for some $1 \leq m \leq M-1$.
We have from \eqref{eq:dsi0},  \eqref{eq:Eequiv} and \eqref{eq:induction}  that
\begin{align} \label{eq:embound} 
\max_{j=0,\ldots,J} | \hat e^{m+1}_j |   \leq c  \|  \hat e^{m+1}_h \|_{1,h}  \leq c  \bigl( \sqrt{E^m} + h^2 + (\Delta t)^2 \bigr)  \leq c e^{\frac12 \mu T} \bigl( h^2 + (\Delta t)^2 \bigr) \leq h,
\end{align}  
provided that $0 < h \leq h_0$ and $0<\Delta t \leq \gamma  h^{\frac{1}{2}}$ for a $\gamma>0$ sufficiently small. 
Similarly, on recalling \eqref{eq:inv1} we obtain
\begin{align*}  
&  \max_{j=1,\ldots,J} | \delta^- \hat e^{m+1}_j | \leq c h^{-\frac12} | \hat e^{m+1}_h |_{1,h} 
  \leq c h^{-\frac12} e^{\frac12 \mu T} \bigl( h^2 + (\Delta t)^2 \bigr) \leq c h^{-\frac12} e^{\frac12 \mu T} \bigl( h^2 + \gamma^2 h \bigr),
\end{align*} 
and hence with the help of the smoothness of $x$ and \eqref{eq:regul} that
\begin{equation} \label{eq:xhbound1}
\tfrac12 c_0 \leq | \delta^- \hat x^{m+1}_{h,j} | \leq 2 C_0, 
\quad j=1,\ldots,J,
\end{equation}
after choosing $h_0$ smaller if necessary. Since 
\begin{equation} \label{eq:hatxdif}
 \hat x^{m+1}_{h,j} - x^{m+1}_j = \hat e^{m+1}_j -(x^{m+1}_j - 2 x^m_j + x^{m-1}_j) , \quad j=0,1,\ldots,J,
\end{equation}
we deduce with the help of a compactness argument that $\hat x^{m+1}_{h,j} \in B_\epsilon(x^{m+1}_j) \subset U$ 
for $j=0,J$ uniformly in $m$ for $0<h \leq h_0$, provided that $h_0$ is sufficiently small. 
Hence
Lemma~\ref{lem:stab} implies the existence and uniqueness of the solution
$x^{m+1}_h$ to \eqref{eq:fea1}. Let us write \eqref{eq:fea1} as a finite difference scheme as follows:
\begin{subequations} \label{eq:DE98}
\begin{align}
D_t x^{m+1}_{h,j} - \dfrac{2}{  (\hat q^{m+1}_{h,j})^2 + (\hat q^{m+1}_{h,j+1})^2}\delta^2 \tilde x^{m+1}_{h,j} &=0, \quad j=1,\ldots,J-1,
\label{eq:DE98a} \\ 
D_t x^{m+1}_{h,j}  \cdot \nabla F(\hat x^{m+1}_{h,j}) & = 0, \quad j = 0, J,
\label{eq:DE98b} \\ 
P(\hat x^{m+1}_{h,0}) \bigl( (\hat q^{m+1}_{h,1})^{2} D_t x^{m+1}_{h,0} - \frac{2}{h} \delta^+  \tilde x^{m+1}_{h,0} \bigr)
 & = 0, 
 \label{eq:DE98c} \\ 
P(\hat x^{m+1}_{h,J})\bigl( (\hat q^{m+1}_{h,J})^{2}
 D_t x^{m+1}_{h,J} + \frac{2}{h} \delta^- \tilde x^{m+1}_{h,J} \bigr)
  & = 0, 
 \label{eq:DE98d} 
\end{align}
\end{subequations}
where we have abbreviated $\hat q^{m+1}_{h,j}:=| \delta^- \hat x^{m+1}_{h,j}|, j=1,\ldots,J$. 
Using \eqref{eq:DE98a} and \eqref{eq:csf} we derive the error relation 
\begin{align}
& D_t e^{m+1}_j  - \dfrac{2}{  (\hat q^{m+1}_{h,j})^2 + (\hat q^{m+1}_{h,j+1})^2}\,   \delta^2 \tilde e^{m+1}_{j}  \label{eq:erra2} \\  
& =   \left(  x_t(\rho_j,t_{m+1}) - D_t x^{m+1}_j \right)+ \frac{1}{ | x^{m+1}_\rho(\rho_j) |^2} \bigl(  \tilde x^{m+1}_{\rho \rho}(\rho_j)  - x^{m+1}_{\rho \rho}(\rho_j)  \bigr) \nonumber \\ & \qquad 
+ \frac{1}{ | x^{m+1}_\rho(\rho_j) |^2} \bigl( \delta^2 \tilde x^{m+1}_j - \tilde x^{m+1}_{\rho \rho}(\rho_j) \bigr)  +  \bigl(  \dfrac{2}{  (\hat q^{m+1}_{h,j})^2 + (\hat q^{m+1}_{h,j+1})^2} - \frac{1}{ | x^{m+1}_\rho(\rho_j) |^2} \bigr) \delta^2 \tilde x^{m+1}_j \nonumber \\
& =:\sum_{\ell=1}^4 g^{m+1}_{\ell,j}, \quad
 j=1,\ldots,J-1. \nonumber
\end{align}
Let us multiply by $- h \,   \delta^2 \tilde e^{m+1}_{j}$, sum over $j=1,\ldots,J-1$  and use \eqref{eq:sbp}:
\begin{align}
&   \sum_{j=1}^J h \,\delta^- D_t e^{m+1}_j  \cdot \delta^- \tilde e^{m+1}_j
+   \sum_{j=1}^{J-1} h \, \dfrac{2}{  (\hat q^{m+1}_{h,j})^2 + (\hat q^{m+1}_{h,j+1})^2} \,   |  \delta^2  \tilde e^{m+1}_{j}|^2 \label{eq:erra3}  \\ &  \qquad
=   D_t e^{m+1}_J  \cdot \delta^- \tilde e^{m+1}_J -  D_t e^{m+1}_0  \cdot \delta^+  \tilde e^{m+1}_0   
 - \sum_{\ell=1}^4 \sum_{j=1}^{J-1} h \,   g^{m+1}_{\ell,j} \cdot 
\delta^2 \tilde e^{m+1}_{j}. \nonumber
\end{align}
In view of \eqref{eq:disctime3}, and \eqref{eq:xhbound1}, we have
\begin{align} 
& \frac{1}{4 \Delta t} \bigl(   | e^{m+1}_h |_{1,h}^2 + | \hat e^{m+2}_h  |^2_{1,h} + | e^{m+1}_h - e^{m}_h |_{1,h}^2 \bigr) 
+ \frac{3}{4 \Delta t} | e^{m+1}_h - 2e^m_h + e^{m-1}_h |_{1,h}^2  + \frac{1}{4 C_0^2} | \tilde e^{m+1}_h |_{2,h}^2 \nonumber  \\
& \leq   \frac{1}{4 \Delta t} \bigl( | e^m_h |_{1,h}^2 + | \hat  e^{m+1}_h  |_{1,h}^2 + | e^m_h - e^{m-1}_h |_{1,h}^2 \bigr) 
 - \sum_{\ell=1}^4 \sum_{j=1}^{J-1} h \,  g^{m+1}_{\ell,j} \cdot 
\delta^2 \tilde e^{m+1}_{j} \nonumber \\
& \qquad +  D_t e^{m+1}_J  \cdot \delta^- \tilde e^{m+1}_J -  D_t e^{m+1}_0  \cdot \delta^+ \tilde e^{m+1}_0. \label{eq:erra4}
\end{align}
Using Taylor expansion, it is not difficult to see that 
\begin{equation} \label{eq:g123}
\sum_{\ell=1}^3 | g^{m+1}_{\ell,j} | \leq c  \bigl( h^2 + (\Delta t)^2 \bigr), \quad j=1,\ldots,J-1,
\end{equation}
while 
\begin{align}
| g^{m+1}_{4,j} | &\leq c \bigl| \dfrac{2}{  (\hat q^{m+1}_{h,j})^2 + (\hat q^{m+1}_{h,j+1})^2} - \dfrac{2}{  ( q^{m+1}_{j})^2 + (q^{m+1}_{j+1})^2} \bigr| + c \bigl| \dfrac{2}{  ( q^{m+1}_{j})^2 + (q^{m+1}_{j+1})^2}  - \frac{1}{| x^{m+1}_\rho(\rho_j)|^2} \bigr| \nonumber \\
& \leq c \bigl( | \delta^-(\hat x^{m+1}_{h,j} - x^{m+1}_j) | + | \delta^-(\hat x^{m+1}_{h,j+1}- x^{m+1}_{j+1}) | \bigr) + c | (q^{m+1}_j)^2 + (q^{m+1}_{j+1})^2 -   2 | x^{m+1}_\rho(\rho_j)|^2  | \nonumber \\
& \leq c  \bigl( | \delta^- \hat e^{m+1}_j | + | \delta^- \hat e^{m+1}_{j+1} | \bigr) + c \bigl( h^2 + (\Delta t)^2 \bigr), \quad j=1,\ldots,J-1,  \label{eq:g4} 
\end{align}
where we used \eqref{eq:hatxdif} and \cite[(3.19a)]{csftime} in the last step. As a result, we obtain
\begin{align}  
& | \sum_{\ell=1}^4 \sum_{j=1}^{J-1} h \,   g^{m+1}_{\ell,j} \cdot \delta^2 \tilde e^{m+1}_{j} | \leq c | \tilde e^{m+1}_h |_{2,h} \Bigl( \sum_{\ell=1}^4  \sum_{j=1}^{J-1} h 
| g^{m+1}_{\ell,j} |^2 \Bigr)^{\frac{1}{2}}  \nonumber  \\ & \quad
\leq c | \tilde e^{m+1}_h  |_{2,h} \bigl( | \hat e^{m+1}_h |_{1,h}  + h^2 + (\Delta t)^2 \bigr)  
\leq \frac{1}{8 C_0^2} | \tilde e^{m+1}_h |_{2,h}^2 + c \bigl(  | \hat e_h^{m+1} |_{1,h}^2 +  h^4 + (\Delta t)^4 \bigr). \label{eq:rhs1}
\end{align}
Let us next examine the boundary terms and split 
\begin{align}\label{eq:t1t2} 
 D_t  e^{m+1}_J  \cdot \delta^- \tilde e^{m+1}_J
& = \bigl(  D_t e^{m+1}_J  \cdot \hat d^{m+1}_J  \bigr) \bigl( \delta^- \tilde e^{m+1}_J \cdot \hat d^{m+1}_J  \bigr) 
 + \hat P^{m+1}_J D_t e^{m+1}_J  \cdot \hat P^{m+1}_J  \delta^- \tilde e^{m+1}_J \nonumber\\ &
 =:T_1+ T_2, 
\end{align}
where we have abbreviated $\hat d^{m+1}_J:= \frac{\nabla F(\hat x^{m+1}_{h,J})}{| \nabla F(\hat x^{m+1}_{h,J})|}$ and $\hat P^{m+1}_J:=P(\hat x^{m+1}_{h,J})$.
Using \eqref{eq:DE98b} and \eqref{eq:pdeb}, we have
\[
D_t e^{m+1}_J  \cdot \hat d^{m+1}_J 
 = D_t x^{m+1}_J  \cdot \bigl( \frac{\nabla F(x^{m+1}_J)}{| \nabla F(x^{m+1}_J) |}  -\frac{\nabla F(\hat x^{m+1}_{h,J})}{| \nabla F(\hat x^{m+1}_{h,J})|}   \bigr)
- \bigl( D_t x^{m+1}_J - x^{m+1}_t(\rho_J) \bigr)  \cdot  \nabla F(x^{m+1}_J),
\]
so that \eqref{eq:hatxdif} together with a Taylor expansion yields
\begin{align} \label{eq:edif}
&\left|  D_t e^{m+1}_J  \cdot \hat d^{m+1}_J \right|  \leq c  \bigl(  | \hat e^{m+1}_J |  + (\Delta t)^2 \bigr). 
\end{align} 
Recalling \eqref{eq:dsi} and the relation $\tilde e^{m+1}_h= \tfrac32 e^{m+1}_h - \tfrac12 \hat e^{m+1}_h$, we deduce that 
\[
\left|  \delta^- \tilde e^{m+1}_J  \cdot \hat d^{m+1}_J  \right| \leq \left|  \delta^- \tilde e^{m+1}_J  \right|  \leq c\bigl(   | e^{m+1}_h  |_{1,h} + | \hat e^{m+1}_h |_{1,h} \bigr)
+ c | \tilde e^{m+1}_h|_{2,h}  ,
\]
and therefore
\begin{align}
| T_1 | & \leq c \bigl( | \hat e^{m+1}_J|   + (\Delta t)^2 \bigr) \bigl( | e^{m+1}_h |_{1,h} + | \hat e^{m+1}_h |_{1,h} + | \tilde e^{m+1}_h |_{2,h}  \bigr)  \nonumber \\
& \leq \epsilon  | \tilde e^{m+1}_h |_{2,h}^2 + c_\epsilon  \bigl(  | e^{m+1}_h |_{1,h}^2 +  | \hat e^{m+1}_h |_{1,h}^2  + | \hat e^{m+1}_J|^2 + (\Delta t)^4 \bigr).
\label{eq:t1}
\end{align}
In order to deal with $T_2$ we first note that in view of \eqref{eq:DE98d} 
\begin{equation} \label{eq:t2a}
 \hat P^{m+1}_J \delta^- \tilde e^{m+1}_J  
 =  - \tfrac12 h  (\hat q^{m+1}_{h,J})^2  \hat P^{m+1}_J D_t x^{m+1}_{h,J} 
  - \hat P^{m+1}_J \delta^- \tilde x^{m+1}_J .  
\end{equation}
Taylor expansion together with \eqref{eq:csf} yields
\begin{align*}
&  \delta^- \tilde x^{m+1}_J = \tilde x^{m+1}_\rho(\rho_J)  -\tfrac12 h \tilde x^{m+1}_{\rho \rho}(\rho_J)  
+ \tfrac16 h^2 \tilde x^{m+1}_{\rho \rho \rho}(\rho_J)  + \mathcal O(h^3) \\
& =  x^{m+1}_\rho(\rho_J)+ \tfrac12 (\Delta t)^2 x^{m+1}_{\rho t t}(\rho_J)  - \tfrac12 h x^{m+1}_{\rho \rho}(\rho_J) + \tfrac16 h^2 x^{m+1}_{\rho \rho \rho}(\rho_J) + \mathcal O(h^3 +  (\Delta t)^3) \\
& =   x^{m+1}_\rho(\rho_J)  - \tfrac12 h | x^{m+1}_\rho (\rho_J) |^2 x^{m+1}_t(\rho_J) + \tfrac12 (\Delta t)^2 x^{m+1}_{\rho t t}(\rho_J) + \tfrac16 h^2 x^{m+1}_{\rho \rho \rho}(\rho_J) + \mathcal O(h^3 + (\Delta t)^3) \\
& =   x^{m+1}_\rho(\rho_J)  - \tfrac12 h (q^{m+1}_J)^2 D_t x^{m+1}_J + r^{m+1}_J + \mathcal O(h^3 + (\Delta t)^3),
\end{align*}
where we have abbreviated
\begin{equation} \label{eq:defd}
r^{m+1}_J:= \tfrac12 (\Delta t)^2 x^{m+1}_{\rho t t}(\rho_J) + \tfrac16 h^2 x^{m+1}_{\rho \rho \rho}(\rho_J)
\end{equation}
and used the estimate
\begin{displaymath}
| (q^{m+1}_J)^2 - | x^{m+1}_\rho(\rho_J) |^2 | \leq c h^2,
\end{displaymath}
which follows from the fact that $x^{m+1}_\rho(\rho_J)  \cdot x^{m+1}_{\rho \rho}(\rho_J)= | x^{m+1}_\rho(\rho_J) |^2 x^{m+1}_\rho(\rho_J) \cdot x^{m+1}_t(\rho_J)=0$.  Setting $P^{m+1}_J:= P(x^{m+1}_J)$ we infer  
with the help of \eqref{eq:pdec} that 
\begin{align*}
\hat P^{m+1}_J \delta^- \tilde x^{m+1}_J& =\hat  P^{m+1}_J x^{m+1}_\rho(\rho_J)  - \tfrac12  h (q^{m+1}_J)^2 \hat P^{m+1}_J D_t x^{m+1}_J   
    + \hat P^{m+1}_J r^{m+1}_J 
+ \mathcal O(h^3 + (\Delta t)^3) \\
& =  \bigl( \hat P^{m+1}_J - P^{m+1}_J \bigr)  x^{m+1}_\rho(\rho_J)- \tfrac12  h (q^{m+1}_J)^2 \hat P^{m+1}_J D_t x^{m+1}_J  
+       P^{m+1}_J r^{m+1}_J    + R^{(1)}_J,
\end{align*}
where $| R^{(1)}_J| \leq c( (h^2+(\Delta t)^2) | \hat e^{m+1}_J| + h^3 +(\Delta t)^3)$. 
In order to handle the first term on the right hand side, we use \eqref{eq:difnormal}, \eqref{eq:hatxdif} and again \eqref{eq:pdec} to obtain
\begin{align} \label{eq:difproj}
& \bigl(\hat P^{m+1}_J - P^{m+1}_J \bigr)  x^{m+1}_\rho(\rho_J) \nonumber \\
& =  - \bigl(   \frac{\nabla F(\hat x^{m+1}_{h,J})}{| \nabla F(\hat x^{m+1}_{h,J})|} \cdot x^{m+1}_\rho(\rho_J) \bigr)
  \frac{\nabla F(\hat x^{m+1}_{h,J})}{| \nabla F(\hat x^{m+1}_{h,J})|} +
\bigl( \nabla F(x^{m+1}_J)  \cdot x^{m+1}_\rho(\rho_J) \bigr)   \nabla F(x^{m+1}_J) \nonumber  \\
& = -  \Bigl( \bigl(   \frac{\nabla F(\hat x^{m+1}_{h,J})}{| \nabla F(\hat x^{m+1}_{h,J})|} - \nabla F(x^{m+1}_J) \bigr) \cdot x^{m+1}_\rho(\rho_J)
\Bigr)  \frac{\nabla F(\hat x^{m+1}_{h,J})}{| \nabla F(\hat x^{m+1}_{h,J})|} \nonumber  \\
& \quad - \bigl( \nabla F(x^{m+1}_J)  \cdot x^{m+1}_\rho(\rho_J) \bigr) 
\bigl( \frac{\nabla F(\hat x^{m+1}_{h,J})}{| \nabla F(\hat x^{m+1}_{h,J})|} - \nabla F(x^{m+1}_J)   \bigr) \nonumber  \\
& = - \bigl( \nabla F(x^{m+1}_J) \cdot x^{m+1}_\rho(\rho_J) \bigr)
A(x^{m+1}_J)  (\hat x^{m+1}_{h,J}- x^{m+1}_J) + \mathcal O(| \hat x^{m+1}_{h,J} - x^{m+1}_J|^2) \nonumber\\
& =  -  \bigl(  \nabla F(x^{m+1}_J) \cdot x^{m+1}_\rho(\rho_J) \bigr) A(x^{m+1}_J) \bigl( \hat  e^{m+1}_J -  (\Delta t)^2  x^m_{tt}(\rho_J) \bigr)  +R^{(2)}_J  ,
\end{align} 
where $| R^{(2)}_J | \leq c    \bigl(   | \hat e^{m+1}_J |^2  + (\Delta t)^4 \bigr)$.  Thus 
\begin{align*}
\hat P^{m+1}_J \delta^- \tilde x^{m+1}_J & = - \tfrac12  h (q^{m+1}_J)^2 \hat P^{m+1}_J D_t x^{m+1}_J  -  \bigl(  \nabla F(x^{m+1}_J) \cdot x^{m+1}_\rho(\rho_J) \bigr) A(x^{m+1}_J)  \hat  e^{m+1}_J \\
& \quad + (\Delta t)^2 \bigl( x^{m+1}_\rho(\rho_J) \cdot  \nabla F(x^{m+1}_J)  \bigr) A(x^{m+1}_J)  x^m_{tt}(\rho_J) +       P^{m+1}_J r^{m+1}_J    + R^{(3)}_J,
\end{align*}
where $|  R^{(3)}_J | \leq c \bigl( | \hat e^{m+1}_J |^2   + h^3 + (\Delta t)^3 \bigr)$. 
Inserting the above relations into \eqref{eq:t2a}, we obtain
with the help of \eqref{eq:defam} that
\begin{align} 
\hat  P^{m+1}_J \delta^- \tilde e^{m+1}_J   
&= -\tfrac12 h (\hat q^{m+1}_{h,J})^2  \hat P^{m+1}_J D_t  e^{m+1}_{J} 
   + \tfrac12 h \bigl( (q^{m+1}_J)^2 - (\hat q^{m+1}_{h,J})^2 \bigr) \hat P^{m+1}_J D_t x^{m+1}_J   \nonumber  \\
   &  \quad -a^{m+1}_J 
  +    \bigl( x^{m+1}_\rho(\rho_J) \cdot \nabla F(x^{m+1}_J) \bigr) A(x^{m+1}_J) \hat e^{m+1}_J     -R^{(3)}_J . \label{eq:t2c} 
\end{align}
As a result, recalling \eqref{eq:disctime2} and observing that
$\Delta t D_t e^{m+1}_J = (\frac{3}{2} e^{m+1}_J - \frac{1}{2} e^m_J)- (\frac{3}{2} e^m_J - \frac{1}{2} e^{m-1}_J)$, 
we obtain
\begin{align*}
T_2 &=  \hat P^{m+1}_J D_t e^{m+1}_J  \cdot \hat P^{m+1}_J  \delta^- \tilde e^{m+1}_J 
 \leq -\tfrac12  h (\hat q^{m+1}_{h,J})^2 \left|  \hat P^{m+1}_J D_t e^{m+1}_{J}   \right|^2 - a^{m+1}_J  \cdot P^{m+1}_J  D_t e^{m+1}_J  \\
& \quad +  \bigl( x^{m+1}_\rho(\rho_J) \cdot \nabla F(x^{m+1}_J) \bigr) \bigl( A(x^{m+1}_J) \hat e^{m+1}_J \bigr) \cdot 
 \bigl( P^{m+1}_J D_t e^{m+1}_J \bigr) \\
& \quad + c \bigl( | \hat e^{m+1}_J |^2 + h^3  +  (\Delta t)^3+  h  | \delta^- \hat e^{m+1}_J | \bigr)  | D_t e^{m+1}_J |    \\
& = -\tfrac12  h (\hat q^{m+1}_{h,J})^2 \left|  \hat P^{m+1}_J D_t e^{m+1}_{J}   \right|^2 - a^{m+1}_J  \cdot  D_t e^{m+1}_J  \\
&  \quad +  \bigl( x^{m+1}_\rho(\rho_J) \cdot \nabla F(x^{m+1}_J) \bigr) \bigl( A(x^{m+1}_J) \hat e^{m+1}_J \bigr) \cdot 
  D_t e^{m+1}_J  \\
 & \quad + c \bigl( | \hat e^{m+1}_J |^2 + h^3  +  (\Delta t)^3+  h  | \delta^- \hat e^{m+1}_J | \bigr)  | D_t e^{m+1}_J |    \\ 
&  \leq  - \tfrac12 h  (\hat q^{m+1}_{h,J})^2 \left| \hat P^{m+1}_J D_t  e^{m+1}_{J}    \right|^2 - \frac{1}{\Delta t} \bigl( G^{m+1}_{J,2} - G^{m}_{J,2} + G^{m+1}_{J,1} - G^{m}_{J,1} \bigr)\\
&\quad + c \bigl( h^4 + (\Delta t)^4 + | e^{m-1}_J|^2 + | e^m_J |^2 \bigr) + \frac{c}{\Delta t}  | e^{m+1}_J - 2 e^m_J + e^{m-1}_J |^2 \\
& \quad + c \bigl( h^3  +  (\Delta t)^3+ h  | \hat e^{m+1}_J |+   h  | \delta^- \hat e^{m+1}_J | \bigr)  | D_t e^{m+1}_J | ,
\end{align*}
where we also used  \eqref{eq:embound}. Recalling \eqref{eq:xhbound1} and \eqref{eq:edif} and the fact that $\Delta t \leq \gamma h^{\frac{1}{2}}$, we infer that 
\begin{align}
T_2 & \leq  - \tfrac18  h c_0^2  \left|  D_t e^{m+1}_{J} \right|^2 + c h \left|  D_t e^{m+1}_{J} \cdot \hat d^{m+1}_J \right|^2 - \frac{1}{\Delta t} \bigl( G^{m+1}_J - G^{m}_J \bigr)
+ \frac{c}{\Delta t} | e^{m+1}_J - 2 e^m_J + e^{m-1}_J |^2 \nonumber \\
& \quad + c  \bigl( h^4 +  (\Delta t)^4 + h^{-1}(\Delta t)^6 + | e^{m-1}_J|^2 + | e^m_J |^2 \bigr) +\tfrac{1}{16} h c_0^2  | D_t e^{m+1}_J |^2+ 
c h\bigl( | \hat e^{m+1}_J|^2 + | \delta^- \hat e^{m+1}_J |^2 \bigr) \nonumber \\
& \leq - \tfrac{1}{16} h c_0^2 | D_t e^{m+1}_J |^2 - \frac{1}{\Delta t} \bigl( G^{m+1}_J - G^{m}_J  \bigr)
+ \frac{c}{\Delta t}  | e^{m+1}_J - 2 e^m_J + e^{m-1}_J |^2 \nonumber  \\
& \quad +  c h| \delta^- \hat e^{m+1}_J |^2  + c  \bigl( h^4 + (\Delta t)^4 + | \hat e^{m+1}_J|^2 + | e^m_J |^2 \bigr). \label{eq:t2}
\end{align}
Arguing in the same way for the left end point and inserting \eqref{eq:t1}, \eqref{eq:t2} into \eqref{eq:t1t2}, and thus into \eqref{eq:erra4}, and applying \eqref{eq:dsi0}, we obtain
\begin{align} 
& \tfrac1{16} c_0^2 h \Delta t \bigl(  \left|  D_t e^{m+1}_{0}  \right|^2 +  \left| D_t  e^{m+1}_{J} \right|^2 \bigr) + G^{m+1}_0 + G^{m+1}_J  + \frac{\Delta t}{16 C_0^2} | \tilde e^{m+1}_h  |_{2,h}^2 \nonumber \\
& \quad  +  \tfrac{1}{4} \bigl(   | e^{m+1}_h |_{1,h}^2 + | \hat e^{m+2}_h  |_{1,h}^2 +| e^{m+1}_h - e^m_h |_{1,h}^2 \bigr)  + \tfrac{1}{4} | e^{m+1}_h - 2e^m_h + e^{m-1}_h |_{1,h}^2  \nonumber \\
& \leq G^{m}_0 +  G^{m}_J + \tfrac{1}{4} \bigl(   |  e^{m}_h |_{1,h}^2 + |  \hat e^{m+1}_h  |_{1,h}^2 + | e^m_h - e^{m-1}_h |_{1,h}^2 \bigr)    + c \max_{j=0,J} | e^{m+1}_j - 2 e^m_j + e^{m-1}_j |^2 \nonumber \\
& \quad    + c \Delta t \bigl( \| e^{m+1}_h \|_{1,h}^2+ \| e^m_h \|_{1,h}^2 + \| \hat e^{m+1}_h \|_{1,h}^2 \bigr)  + c \Delta t \bigl( h^4 + (\Delta t)^4 \bigr).  \label{eq:erra5} 
\end{align}
Next, we infer from \eqref{eq:erra2}, \eqref{eq:xhbound1}, 
\eqref{eq:g123} and \eqref{eq:g4} that
\begin{align} \label{eq:gest} 
\Delta t \sum_{j=1}^{J-1} h | D_t e^{m+1}_j  |^2 & \leq \Delta t   \sum_{j=1}^{J-1}  h  \left( 2  \bigl(  \dfrac{2}{  (\hat q^{m+1}_{h,j})^2 + (\hat q^{m+1}_{h,j+1})^2} \bigr)^2 | \delta^2 \tilde e^{m+1}_j |^2 +
2 \Bigl| \sum_{\ell=1}^4 g^{m+1}_{\ell,j} \Bigr|^2 \right) \nonumber \\
& \leq \frac{32}{c_0^4} \Delta t | \tilde e^{m+1}_h  |_{2,h}^2 + c \Delta t  \sum_{\ell=1}^4 \sum_{j=1}^{J-1} h | g^{m+1}_{\ell,j} |^2 \nonumber \\
& \leq \frac{32}{c_0^4} \Delta t | \tilde e^{m+1}_h |_{2,h}^2   + c \Delta t | \hat e^{m+1}_h |_{1,h}^2  + c \Delta t \bigl( h^4 + (\Delta t)^4 \bigr).
\end{align}
In view of \eqref{eq:erra5} and \eqref{eq:gest}, there exists $c_1>0$, which depends on $c_0$ and $C_0$, such that
\begin{align} \label{eq:erra6} 
& c_1 \Delta t  | D_t e^{m+1}_h  |_{0,h}^2  +  \tfrac{1}{4} \bigl(   | e^{m+1}_h |_{1,h}^2 + | \hat e^{m+2}_h  |_{1,h}^2 +| e^{m+1}_h - e^m_h |_{1,h}^2 \bigr) \nonumber \\
& \quad  + G^{m+1}_0 + G^{m+1}_J + \tfrac{1}{4} | e^{m+1}_h - 2e^m_h + e^{m-1}_h |_{1,h}^2 \nonumber \\
& \leq \tfrac{1}{4} \bigl(  |  e^{m}_h |_{1,h}^2 + | \hat e^{m+1}_h  |_{1,h}^2 + | e^m_h - e^{m-1}_h |_{1,h}^2 \bigr)  +  G^{m}_0 +  G^{m}_J  + c \max_{j=0,J} | e^{m+1}_j - 2 e^m_j + e^{m-1}_j |^2 \nonumber \\
& \quad    + c \Delta t \bigl(\| e^{m+1}_h \|_{1,h}^2+  \| e^m_h \|_{1,h}^2 + \| \hat e^{m+1}_h \|_{1,h}^2 \bigr)  + c \Delta t \bigl( h^4 + (\Delta t)^4 \bigr) \nonumber \\
& \leq \tfrac{1}{4} \bigl(  |  e^{m}_h |_{1,h}^2 + | \hat e^{m+1}_h  |_{1,h}^2 + | e^m_h - e^{m-1}_h |_{1,h}^2 \bigr)  +  G^{m}_0 +  G^{m}_J + \tfrac{1}{8}  | e^{m+1}_h - 2e^m_h + e^{m-1}_h |_{1,h}^2 \nonumber    \\
& \quad + c  | e^{m+1}_h - 2e^m_h + e^{m-1}_h |_{0,h}^2 + c \Delta t  \bigl(  \|  e^{m+1}_h \|_{1,h}^2 + \| e^m_h \|_{1,h}^2 + \| \hat e^{m+1}_h \|_{1,h}^2 \bigr)  + c \Delta t \bigl( h^4 + (\Delta t)^4 \bigr),
\end{align} 
where we have used \eqref{eq:dsi0} and Young's inequality. 
Furthermore, we have in view of \eqref{eq:disctime1} that
\begin{align*}
& K \bigl( | e^{m+1}_h |_{0,h}^2 + | \hat e^{m+2}_h  |^2_{0,h} + | e^{m+1}_h - 2 e^m_h + e^{m-1}_h |_{0,h}^2 \bigr) \\
& = K \bigl(  | e^{m}_h |_{0,h}^2  + | \hat e^{m+1}_h  |_{0,h}^2 \bigr) + 4 K  \Delta t \bigl(e^{m+1}_h, D_t e^{m+1}_h\bigr)^h \\
& \leq  K \bigl(  | e^{m}_h |_{0,h}^2  + | \hat e^{m+1}_h  |_{0,h}^2 \bigr) +   \tfrac12 c_1 \Delta t  | D_t e^{m+1}_h  |_{0,h}^2 + c \Delta t  | e^{m+1}_h |_{0,h}^2,
\end{align*}
which combined with \eqref{eq:erra6}, and after choosing $K$ larger if 
necessary, yields
\begin{align*}
E^{m+1}&  \leq E^{m} + c \Delta t \bigl( \| e^{m+1}_h \|_{1,h}^2 + \| e^m_h \|_{1,h}^2 + \| \hat e^{m+1}_h \|_{1,h}^2 \bigr)  + c \Delta t \bigl( h^4 + (\Delta t)^4 \bigr) \\
& \leq E^{m} + c \Delta t \bigl( E^{m+1} + E^m  \bigr) + c \Delta t \bigl( h^4 + (\Delta t)^4 \bigr).
\end{align*}
On recalling the induction hypothesis \eqref{eq:induction}, we obtain for small $\Delta t$ that
\begin{align*}
E^{m+1}& \leq E^{m} + c \Delta t E^m + c \Delta t \bigl( h^4 + (\Delta t)^4 \bigr) \\
& \leq \hat c \bigl( h^4 + (\Delta t)^4 \bigr) e^{\mu t_{m}} + c  \hat c \Delta t \bigl( h^4 + (\Delta t)^4 \bigr)  e^{\mu t_m}  + c \Delta t \bigl( h^4 + (\Delta t)^4 \bigr) \\
& \leq \hat c \bigl( h^4 + (\Delta t)^4 \bigr) e^{\mu t_{m}}  \bigl( 1+  2 c \Delta t e^{\mu \Delta t}  \bigr)  \leq \hat c \bigl( h^4 + (\Delta t)^4 \bigr)  e^{\mu t_{m+1}},
\end{align*}
provided that we choose $\hat c \geq c$ and $\mu \geq 2c$. 
Thus \eqref{eq:induction} holds for $m+1$ and hence for all $0 \leq m \leq M$. 
In conclusion, it follows from \eqref{eq:induction} and \eqref{eq:Eequiv} 
that \eqref{eq:fdh1} holds. This completes the proof.

\setcounter{equation}{0}
\section{Numerical results} \label{sec:nr}
\newcommand{\errorxL}{\| x -  x_h\|_{0}}
\newcommand{\errorxH}{\| x -  x_h\|_{1}}

Unless otherwise stated, we use the scheme \eqref{eq:feafilt}
for the numerical simulations presented in this section.  
In all these experiments we make use of the initial data $x^1_h$ as the solution of \eqref{eq:DE98prednew}.
For some of our simulations we will monitor the ratio
\begin{equation} \label{eq:ratio}
\ratio^m = \dfrac{\max_{j=1,\ldots,J} |x^m_{h,j} - x^m_{h,j-1}|}
{\min_{j=1,\ldots, J} |x^m_{h,j} - x^m_{h,j-1}|}
\end{equation}
between the lengths of the longest and shortest element of the polygonal curve
$x^m_h(\overline I)$.
Clearly $\ratio^m\geq1$, with equality if and only if the curve is 
equidistributed.

\subsection{Open curves}

In \cite{DeckelnickE98} an arclength solution to \eqref{eq:pde} consisting of
shrinking half-circles within a half plane is considered. 
For the right half plane $\Omega = \bRplus \times \bR$ we generalize it here
to
\begin{equation} \label{eq:csfsolxg}
x(\rho,t) = (1 - 2 t)^{\frac12} \left( \sin g_1(\rho), \cos g_1(\rho) \right)^T
\quad \rho \in I,\ t \in [0, \tfrac12),
\end{equation}
with
\begin{equation} \label{eq:g}
g_1(\rho) = \pi \rho + \delta \sin(\pi\rho), \quad \delta = 0.1.
\end{equation}
Clearly, this proposed solution no longer solves \eqref{eq:pde}, but rather an
inhomogeneous variant with a nonzero right hand side in \eqref{eq:csf}. Hence,
for the convergence experiments, we compare \eqref{eq:csfsolxg} with the
discrete solutions of \eqref{eq:feafilt}, where the zero 
right hand side in \eqref{eq:fea} is replaced with $(f^{m+1}, \eta_h)$, 
where 
\begin{equation} \label{eq:f}
f = |x_\rho|^2 x_t - x_{\rho\rho}
\end{equation}
denotes the residual of \eqref{eq:csfsolxg} with respect to \eqref{eq:csf}. 
In a similar fashion, we replace the right hand sides in 
\eqref{eq:DE98prednewa}, \eqref{eq:DE98prednewd}, \eqref{eq:DE98prednewe} 
with $f^{0}_j$, $P(x^0_0)f^{0}_0$ and
$P(x^0_J) f^{0}_J$, respectively.
The results for the scheme \eqref{eq:feafilt}, 
are shown in Table~\ref{tab:fd_filteredsemicircleg},
where we have used the error notations
\[
\errorxL := \max_{0 \leq m \leq M} \| x(\cdot,t_m) - x^m_h \|_{0} , \quad
\errorxH := \max_{0 \leq m \leq M} \| x(\cdot,t_m) - x^m_h \|_{1} .
\]
These results confirm the optimal error estimates proven in Theorem~\ref{thm:main}.
\begin{table}
\center
\begin{tabular}{|r|c|c|c|c|}
\hline
$J$ & $\errorxL$ & EOC & $\errorxH$ & EOC \\ \hline
32   &9.9804e-03& ---&9.0571e-02& --- \\
64   &2.7535e-03&1.86&4.5289e-02&1.00 \\
128  &7.2150e-04&1.93&2.2645e-02&1.00 \\
256  &1.7528e-04&2.04&1.1323e-02&1.00 \\
512  &4.3186e-05&2.02&5.6613e-03&1.00 \\
1024 &1.0851e-05&1.99&2.8307e-03&1.00 \\
2048 &2.7194e-06&2.00&1.4153e-03&1.00 \\
4096 &6.7858e-07&2.00&7.0766e-04&1.00 \\
\hline
\end{tabular}
\caption{Errors for the convergence test for \eqref{eq:csfsolxg} with
\eqref{eq:g} over the time interval $[0,0.4]$. 
We use $\Delta t = h$ for the scheme \eqref{eq:feafilt}.
We also display the experimental orders of convergence (EOC).
}
\label{tab:fd_filteredsemicircleg}
\end{table}%

Next we would like to construct a forced solution within the elliptic domain
\begin{equation} \label{eq:Omega_ell}
\Omega = \{ \tbinom{x}{y} \in \bR^2 : \tfrac14 x^2 + y^2 < 1\}.
\end{equation}
To this end, we define a family of circle segments that meet the boundary
$\partial\Omega$ orthogonally. Let
$\alpha : [0,T] \to (0,1)$ and set 
$\beta(t) = \sqrt{4 - 3 \alpha^2(t)}$.
We postulate
\begin{subequations} \label{eq:xnew_ellipse}
\begin{equation} \label{eq:xnew_ellipsea}
x(\rho, t) = (1 - \alpha^2(t))^{-\frac12} \binom{\alpha(t) \beta(t) \cos(g_2(\rho,t))}
{1 + \alpha(t) \beta(t) \sin(g_2(\rho,t))},
\quad \rho \in I,\ t \in [0, T],
\end{equation}
where
\begin{equation} \label{eq:gnew_ellipse}
g_2(\rho,t) = (2\rho - 1)\arccos\left(\frac{\alpha(t)}{\beta(t)}\right) - \tfrac\pi2,
\end{equation}
and $\alpha$ can be chosen, for example, as 
\begin{equation} \label{eq:alphanew2}
\alpha(t) = \tfrac34 - t, \quad t \in [0,\tfrac12].
\end{equation}
\end{subequations}
Observe that \eqref{eq:xnew_ellipsea} intersects the ellipse $\partial\Omega$ 
at the points $(\pm 2 \alpha(t), \sqrt{1-\alpha^2(t)})^T$ at right angles,
see Figure~\ref{fig:xnewell} for a visualization.
For the convergence experiments with \eqref{eq:xnew_ellipse}, as before, we
compute the residual \eqref{eq:f} of \eqref{eq:xnew_ellipsea} with respect to
\eqref{eq:csf} and modify the right hand sides in the discrete schemes
appropriately. The results for the scheme \eqref{eq:feafilt}
are shown in Table~\ref{tab:fd_filteredellipse2circle}.
Once again we observe the optimal convergence rates proven in
Theorem~\ref{thm:main}.
\begin{figure}
\center
\includegraphics[angle=-90,width=0.4\textwidth]{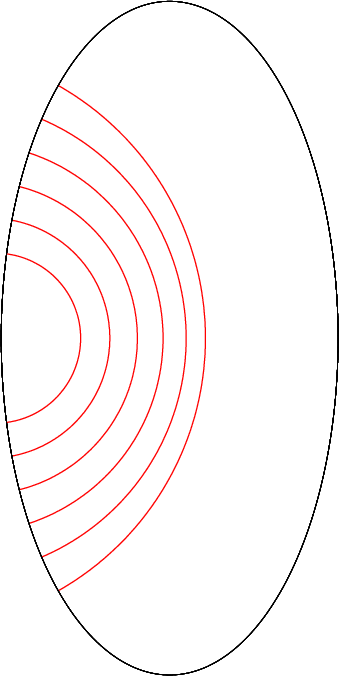}
\caption{The constructed solution \eqref{eq:xnew_ellipse} inside
a $2\!:\!1$ ellipse at times $t=0,0.1,0.2,0.3,0.4,0.5$.
}
\label{fig:xnewell}
\end{figure}%
\begin{table}
\center
\begin{tabular}{|r|c|c|c|c|}
\hline
$J$ & $\errorxL$ & EOC & $\errorxH$ & EOC \\ \hline
32   &1.2504e-02& ---&7.0620e-02& --- \\
64   &3.4277e-03&1.87&3.5087e-02&1.01 \\
128  &8.9026e-04&1.94&1.7503e-02&1.00 \\
256  &2.2631e-04&1.98&8.7463e-03&1.00 \\
512  &5.7013e-05&1.99&4.3725e-03&1.00 \\
1024 &1.4306e-05&1.99&2.1862e-03&1.00 \\
2048 &3.5828e-06&2.00&1.0931e-03&1.00 \\
4096 &8.9650e-07&2.00&5.4654e-04&1.00 \\
\hline
\end{tabular}
\caption{Errors for the convergence test for \eqref{eq:xnew_ellipse}
inside a $2\!:\!1$ ellipse
over the time interval $[0,0.5]$. 
We use $\Delta t = h$ for the scheme \eqref{eq:feafilt}.
We also display the experimental orders of convergence (EOC).
}
\label{tab:fd_filteredellipse2circle}
\end{table}%

\begin{remark} \label{rem:pd}
As a comparison, we recall from Remark~\ref{rem:fea}
the predictor-corrector scheme \eqref{eq:cnfea},
which in \cite{csftime} was shown to be second order in time for closed curves.
Repeating the two previous convergence experiments for this scheme yields
the results reported in Tables~\ref{tab:fd_cn7semicircleg} and
\ref{tab:fd_cn7ellipse2circle}. As we can see, the convergence experiment
for \eqref{eq:xnew_ellipse} shows a suboptimal convergence rate in the case of a
curved boundary.
\begin{table}
\center
\begin{tabular}{|r|c|c|c|c|}
\hline
$J$ & $\errorxL$ & EOC & $\errorxH$ & EOC \\ \hline
32   &3.5231e-03& ---&9.0571e-02& --- \\
64   &1.0026e-03&1.81&4.5289e-02&1.00 \\
128  &2.6707e-04&1.91&2.2645e-02&1.00 \\
256  &6.3741e-05&2.07&1.1323e-02&1.00 \\
512  &1.5567e-05&2.03&5.6613e-03&1.00 \\
1024 &3.9201e-06&1.99&2.8307e-03&1.00 \\
2048 &9.8356e-07&1.99&1.4153e-03&1.00 \\
4096 &2.4516e-07&2.00&7.0766e-04&1.00 \\
\hline
\end{tabular}
\caption{Errors for the convergence test for \eqref{eq:csfsolxg} with
\eqref{eq:g} over the time interval $[0,0.4]$. 
We use $\Delta t = h$ for the scheme \eqref{eq:cnfea}.
We also display the experimental orders of convergence (EOC).
}
\label{tab:fd_cn7semicircleg}
\end{table}%
\begin{table}
\center
\begin{tabular}{|r|c|c|c|c|}
\hline
$J$ & $\errorxL$ & EOC & $\errorxH$ & EOC \\ \hline
32   &7.1813e-03& ---&6.9987e-02& --- \\
64   &2.1706e-03&1.73&3.4988e-02&1.00 \\
128  &6.3153e-04&1.78&1.7492e-02&1.00 \\
256  &1.8295e-04&1.79&8.7452e-03&1.00 \\
512  &5.3702e-05&1.77&4.3724e-03&1.00 \\
1024 &1.6113e-05&1.74&2.1862e-03&1.00 \\
2048 &4.9597e-06&1.70&1.0931e-03&1.00 \\
4096 &1.5666e-06&1.66&5.4654e-04&1.00 \\
\hline
\end{tabular}
\caption{Errors for the convergence test for \eqref{eq:xnew_ellipse} 
inside a $2\!:\!1$ ellipse 
over the time interval $[0,0.5]$. 
We use $\Delta t = h$ for the scheme \eqref{eq:cnfea}.
We also display the experimental orders of convergence (EOC).
}
\label{tab:fd_cn7ellipse2circle}
\end{table}%
\end{remark}

As a further convergence test, we would like to consider the following family
of curves evolving within the unit ball $\Omega = \bB_1^3(0) \subset \bR^3$.
Let $\alpha : [0,T] \to (0,1)$ as before and set
\begin{subequations} \label{eq:xsol_klaus3d}
\begin{equation} \label{eq:xsol_klaus3da}
x(\rho,t)=(0,0,\alpha^{-1}(t))^T +  \sqrt{\alpha^{-2}(t) - 1} 
(\cos t \sin g_3(\rho,t),\sin t \sin g_3(\rho,t),\cos g_3(\rho,t))^T,
\end{equation}
where 
\begin{equation} \label{eq:g3}
g_3(\rho,t)=(2 \rho -1) \arcsin \alpha(t) + \pi.
\end{equation}
\end{subequations}
Here $\alpha$ may, for example, be defined as in \eqref{eq:alphanew2}.
Observe that \eqref{eq:xsol_klaus3d} parameterizes the arc of a circle of 
radius $\sqrt{\alpha^{-2}(t) - 1}$ around the centre $(0,0,\alpha^{-1}(t))^T$,
lying within a vertical hyperplane that is obtained from rotating
$\bR\times \{0\} \times \bR$ around the $z$-axis by an angle $t$. 
The arc meets the unit sphere orthogonally at the two points
$( \pm \sqrt{1-\alpha(t)^2} \cos t, \pm \sqrt{1-\alpha(t)^2} \sin t,
\alpha(t))^T$,
see Figure~\ref{fig:klaus3d} for a visualization.
For the convergence experiment with \eqref{eq:xsol_klaus3d}, as before, we
compute the residual \eqref{eq:f} of \eqref{eq:xsol_klaus3da} with respect to
\eqref{eq:csf} and modify the right hand sides in the discrete schemes
appropriately. The results for the scheme \eqref{eq:feafilt}
are shown in Table~\ref{tab:fd_filteredklaus3d}.
Once again we observe the optimal convergence rates proven in
Theorem~\ref{thm:main}.
\begin{figure}

\center
\includegraphics[angle=-0,width=0.4\textwidth]{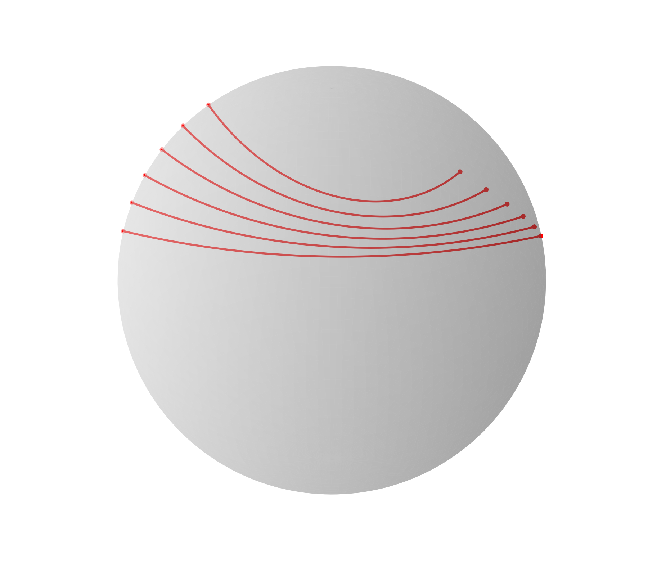}
\caption{The constructed solution \eqref{eq:xsol_klaus3d} inside
the unit sphere $\bB_1^3(0)$ at times $t=0,0.1,0.2,0.3,0.4,0.5$.
}
\label{fig:klaus3d}
\end{figure}%
\begin{table}
\center
\begin{tabular}{|r|c|c|c|c|}
\hline
$J$ & $\errorxL$ & EOC & $\errorxH$ & EOC \\ \hline
32   &3.6748e-03& ---&2.2888e-02& --- \\
64   &9.6355e-04&1.93&1.1444e-02&1.00 \\
128  &2.4513e-04&1.97&5.7219e-03&1.00 \\
256  &6.1721e-05&1.99&2.8610e-03&1.00 \\
512  &1.5480e-05&2.00&1.4305e-03&1.00 \\
1024 &3.8757e-06&2.00&7.1524e-04&1.00 \\
2048 &9.6962e-07&2.00&3.5762e-04&1.00 \\
4096 &2.4249e-07&2.00&1.7881e-04&1.00 \\
\hline
\end{tabular}
\caption{Errors for the convergence test for \eqref{eq:xsol_klaus3d}
inside the unit sphere $\bB_1^3(0)$ over the time interval $[0,0.5]$. 
We use $\Delta t = h$ for the scheme \eqref{eq:feafilt}.
We also display the experimental orders of convergence (EOC).
}
\label{tab:fd_filteredklaus3d}
\end{table}%

For the next numerical simulation we consider the evolution of a curve
inside the elliptic domain \eqref{eq:Omega_ell}. 
We start from a horizontal line, at height $y = -0.1$, which means that this
particular initial data does not satisfy the right contact angle condition
\eqref{eq:bc}. Of course, for the numerical scheme that is not a problem.
We show the
evolution for the discrete parameters $J=256$ and $\Delta t = 10^{-4}$
in Figure~\ref{fig:ell21}. As is to be expected,
the curve shrinks to a point.
Observe that the ratio \eqref{eq:ratio} remains bounded and
decreases towards 1 as the curve shrinks.
\begin{figure}
\center
\includegraphics[angle=-90,width=0.4\textwidth]{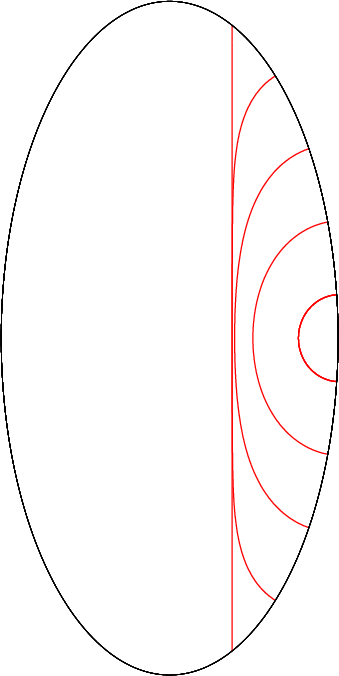}
\qquad
\includegraphics[angle=-90,width=0.31\textwidth]{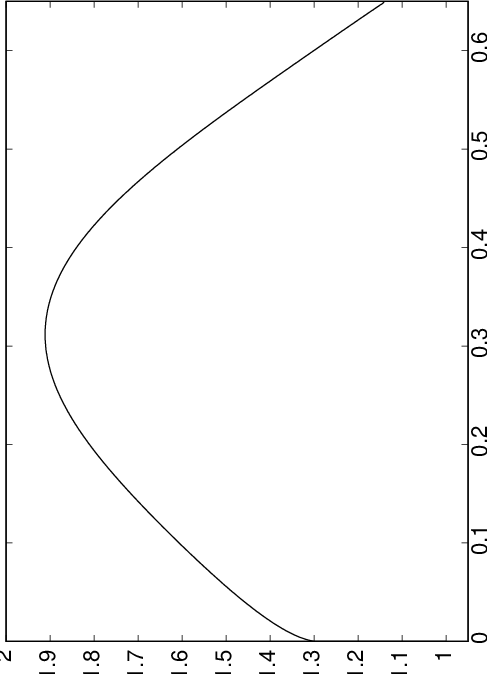}\\
\caption{Curve shortening flow inside a $2\!:\!1$ ellipse. We show the discrete
solution at times $t=0,0.1,0.3,0.5,0.65$.
On the right we show the evolution of $\ratio^m$ over time.
}
\label{fig:ell21}
\end{figure}%

Next we start with a horizontal line of length about $\frac54$, and
at height $y = 0.01$, inside the domain $\Omega = \bB_1^2(0) \setminus
\overline{\bB_\frac14^2(\binom{-\frac12}{0})}$. This experiment is inspired by
Figure~3 in \cite{DeckelnickE98}, where a very similar setup was considered. 
We show a simulation with $J= 256$ and $\Delta t = 10^{-4}$
in Figure~\ref{fig:holedisk}. We can see that 
because the initial data starts
just above the stationary solution represented by the straight line from
$(-\frac14,0)$ to $(1,0)$, the curve slowly travels around the annular domain
to finally settle on the global minimizer: the line segment from
$(-1,0)$ to $(-\frac34,0)$. It is noteworthy that the polygonal curve remains
nearly equidistributed throughout the evolution, and eventually 
assumes an equidistributed numerical steady state.
\begin{figure}
\center
\includegraphics[angle=-90,width=0.3\textwidth]{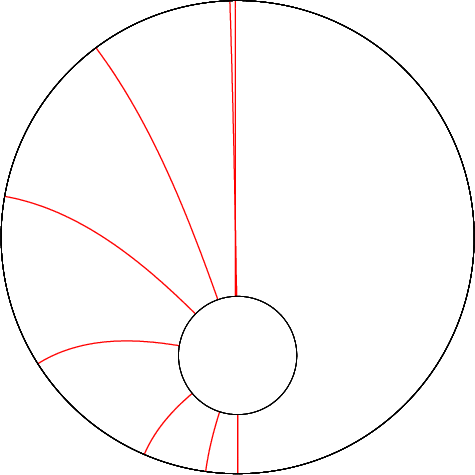} \qquad
\includegraphics[angle=-90,width=0.31\textwidth]{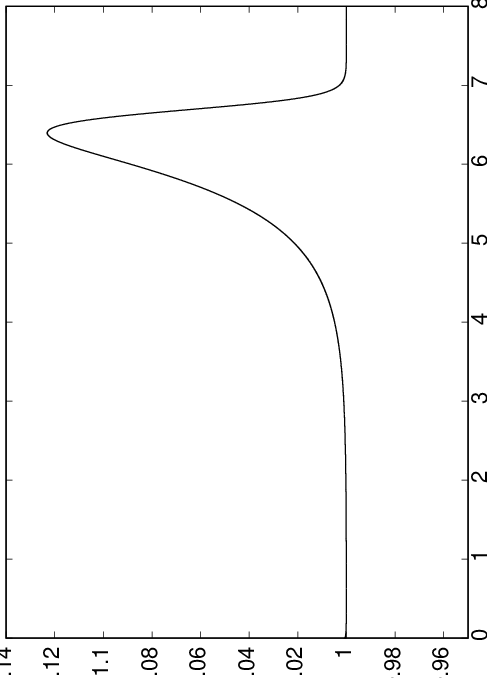}\\
\caption{Curve shortening flow inside a disk with a hole. 
We show the discrete solution at times $t=0,1,5,6,6.5,6.8,7,8$.
On the right we show the evolution of $\ratio^m$ over time.
}
\label{fig:holedisk}
\end{figure}%

In our next experiment we consider an open helix in $\bR^3$ evolving inside
$\Omega = (-\frac14,\frac54) \times \bR^2$.
Here the helix part of the initial curve is defined by
\begin{equation}
 x_0(\varrho) = (\varrho, \sin(8\,\pi\varrho), \cos(8\,\pi\,\varrho))^T
\,,\quad \varrho \in [0,1]\,,
\label{eq:helix}
\end{equation}
and the initial curve is constructed from \eqref{eq:helix} by merging it with
the two line segments \linebreak $[(-\frac14,0,1)^T, (0,0,1)^T]$ and 
$[(1,0,1)^T, (\frac54,0,1)^T]$. 
A simulation for $J=512$ and $\Delta t = 10^{-4}$ is shown
in Figure~\ref{fig:openhelix}, where we notice that the helix straightens to a
straight line. Of course, while it does so, the two endpoints slide
orthogonally along the two hyperplanes that make up $\partial\Omega$.
\begin{figure}
\center
\mbox{
\includegraphics[angle=-0,width=0.25\textwidth]{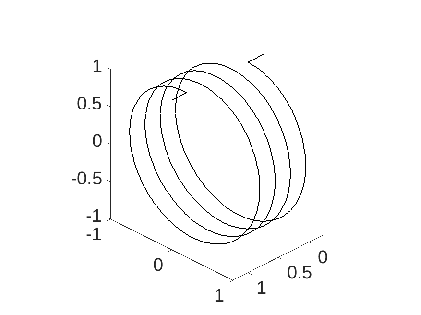}
\includegraphics[angle=-0,width=0.25\textwidth]{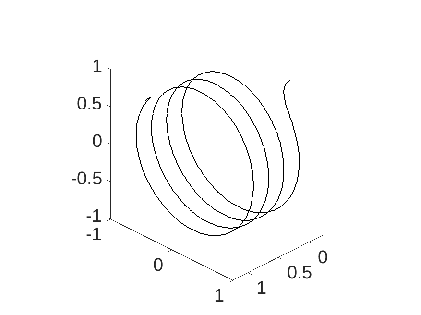}
\includegraphics[angle=-0,width=0.25\textwidth]{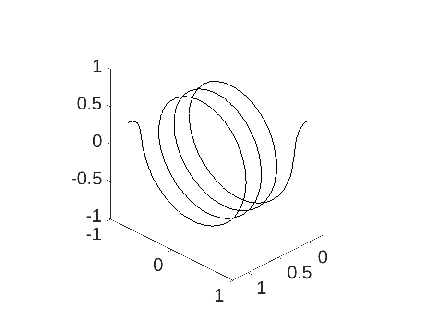}
\includegraphics[angle=-0,width=0.25\textwidth]{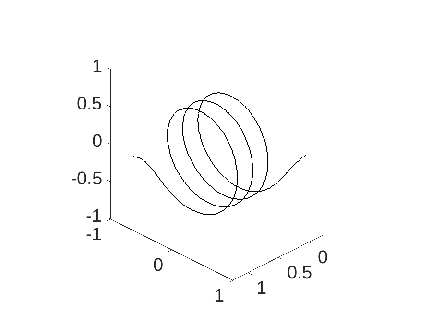}
}
\mbox{
\includegraphics[angle=-0,width=0.25\textwidth]{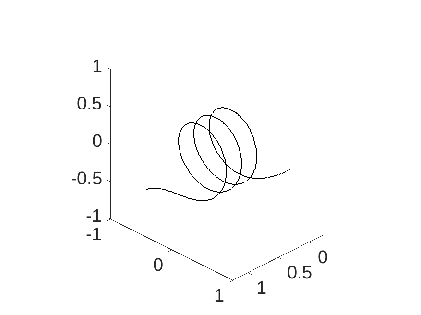}
\includegraphics[angle=-0,width=0.25\textwidth]{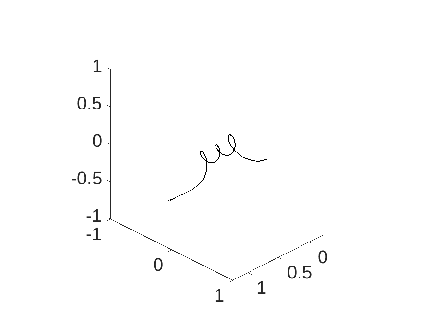}
\includegraphics[angle=-0,width=0.25\textwidth]{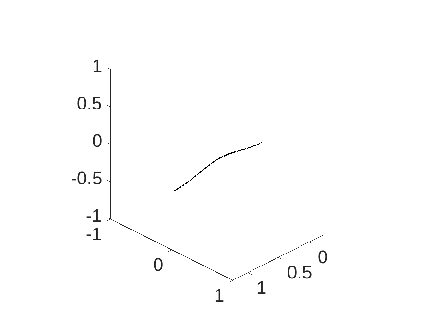}
\includegraphics[angle=-0,width=0.25\textwidth]{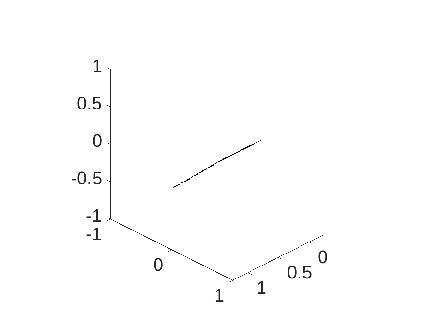}
}
\caption{Curve shortening flow in $\bR^3$.
We show the discrete solution at times $t=0,0.1,0.2,0.3,0.4,0.5,0.6,0.7$.
}
\label{fig:openhelix}
\end{figure}%

\subsection{Closed curves}

For the sake of completeness, we also consider some numerical simulations for
closed curves. Observe that in this case, the scheme \eqref{eq:cnfea} from
\cite{csftime} offers an alternative second order in time method. Here the
advantage of the scheme \eqref{eq:feafilt} is that only a single linear system
needs to be solved at each time step, rather than two.

We begin with a convergence experiment consisting of radially shrinking
circles, that is
\begin{equation} \label{eq:closedsolxg}
x(\rho,t) = (1 - 2 t)^{\frac12} \left( \sin g_1(2\rho), \cos g_1(2\rho) \right)^T
\quad \rho \in I,\ t \in [0, \tfrac12),
\end{equation}
with \eqref{eq:g}. 
The results are shown in Table~\ref{tab:fd_filteredcircleg},
confirming the estimates proven in Theorem~\ref{thm:main} in the case of
closed curves.
\begin{table}
\center
\begin{tabular}{|r|c|c|c|c|}
\hline
$J$ & $\errorxL$ & EOC & $\errorxH$ & EOC \\ \hline
32   &8.9306e-03& ---&3.6212e-01& --- \\
64   &2.4955e-03&1.84&1.8114e-01&1.00 \\
128  &6.5729e-04&1.92&9.0578e-02&1.00 \\
256  &1.5890e-04&2.05&4.5290e-02&1.00 \\
512  &3.9051e-05&2.02&2.2645e-02&1.00 \\
1024 &9.8170e-06&1.99&1.1323e-02&1.00 \\
2048 &2.4610e-06&2.00&5.6613e-03&1.00 \\
4096 &6.1389e-07&2.00&2.8307e-03&1.00 \\
\hline
\end{tabular}
\caption{Errors for the convergence test for \eqref{eq:closedsolxg} with
\eqref{eq:g} over the time interval $[0,0.4]$. 
We use $\Delta t = h$ for the scheme \eqref{eq:feafilt}.
We also display the experimental orders of convergence (EOC).
}
\label{tab:fd_filteredcircleg}
\end{table}%

In our final experiment we consider
a closed helix in $\bR^3$, similarly to \cite[Figure~2]{curves3d}. 
Here the initial curve is constructed from \eqref{eq:helix}
by connecting $x_0(0)$ and $x_0(1)$ with a polygon that visits the origin
and $(1,0,0)^T$. A simulation for $J=512$ and $\Delta t = 10^{-4}$ is shown 
in Figure~\ref{fig:helix}. We observe that the helix attempts to unravel while
it shrinks, and it eventually shrinks to a point.
\begin{figure}
\center
\includegraphics[angle=-0,width=0.3\textwidth]{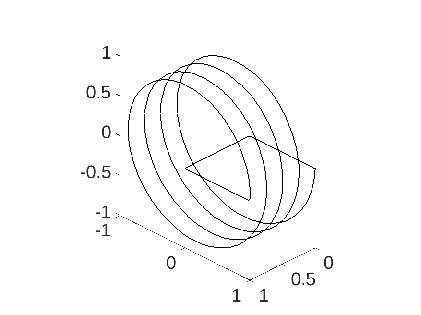}
\includegraphics[angle=-0,width=0.3\textwidth]{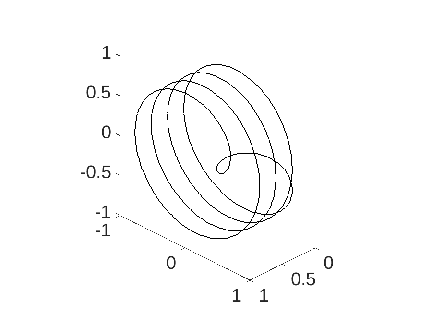}
\includegraphics[angle=-0,width=0.3\textwidth]{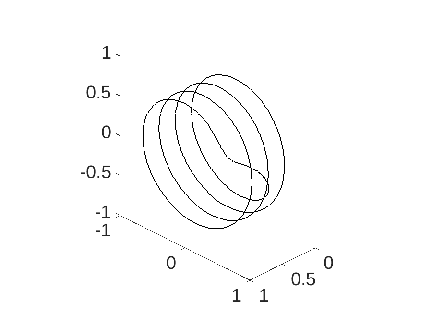}
\includegraphics[angle=-0,width=0.3\textwidth]{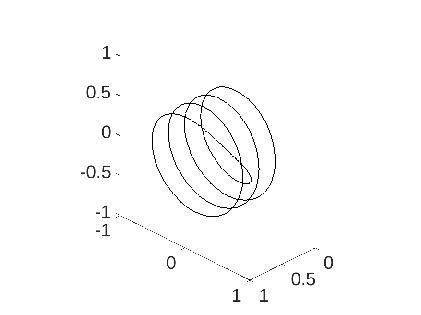}
\includegraphics[angle=-0,width=0.3\textwidth]{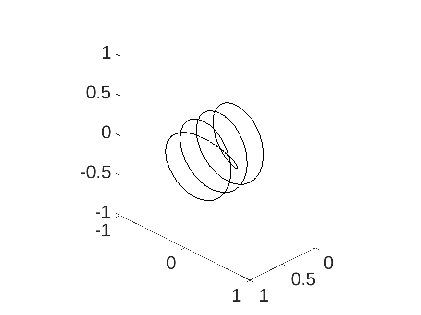}
\includegraphics[angle=-0,width=0.3\textwidth]{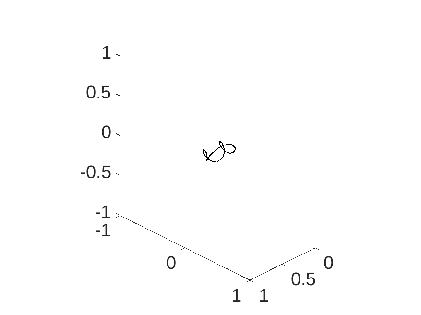}
\caption{Curve shortening flow in $\bR^3$.
We show the discrete solution at times $t=0,0.1,0.2,0.3,0.4,0.5$.
}
\label{fig:helix}
\end{figure}%

\begin{appendix}
\setcounter{equation}{0} 
\renewcommand{\theequation}{\Alph{section}.\arabic{equation}}
\section{Appendix} \label{sec:AppA}

In this appendix we propose the solution of a single linear system in order to
obtain discrete initial data satisfying
\eqref{eq:este1}. 
Set $q^0_{j}:=| \delta^- x^{0}_{j}|$, $j=1,\ldots,J$, and recall the definition
\eqref{eq:defPA}.
Let $x^1_h \in \Vh$ be the solution of the problem
\begin{subequations} \label{eq:DE98prednew}
\begin{align}
& \tfrac{1}{2} \bigl( (q^0_{j})^2 +  (q^0_{j+1})^2 \bigr) \frac{x^1_{h,j} - x^0_{j}}{\Delta t} - \delta^2 x^{1}_{h,j} =0, \quad
j=1,\ldots,J-1, \label{eq:DE98prednewa} \\
&\frac{x^{1}_{h,0} - x^0_{0}}{\Delta t} \cdot \left\{ \nabla F(x^0_{0}) 
+ \frac{2\Delta t }{h  (q^0_{1})^2} A(x^0_0) \delta^+ x^0_{0} 
\right\} =0,
\label{eq:DE98prednewb} \\
&\frac{x^{1}_{h,J} - x^0_{J}}{\Delta t} \cdot \left\{  \nabla F(x^0_{J}) 
- \frac{2\Delta t }{ h  (q^0_{J})^2}   A(x^0_J) \delta^- x^0_{J}  \right\} =0,
\label{eq:DE98prednewc} \\
&  (q^0_{1})^2 \Bigl(\Id
+  \frac{2 \Delta t }{h (q^0_{1})^2} (\delta^+ x^0_{0}\cdot 
\nabla F (x^0_{0}))  A(x^0_0) \Bigr) P(x^0_0)
\frac{x^{1}_{h,0} - x^0_{0}}{\Delta t}
- \frac{2}{h} P(x^0_0) \delta^+ x^{1}_{h,0}  = 0,
\label{eq:DE98prednewd} \\
&   (q^0_{J})^2 \Bigl( \Id
- \frac{2 \Delta t }{h(q^0_{J})^2} (\delta^- x^0_{J}\cdot 
\nabla F (x^0_{J})) A(x^0_J)   \Bigr) P(x^0_J)
 \frac{x^{1}_{h,J} - x^0_{J}}{\Delta t} 
 + \frac{2}{h}  P(x^0_J) \delta^- x^{1}_{h,J} = 0,
\label{eq:DE98prednewe} 
\end{align}
\end{subequations}
where $P(z), A(z)$ for $z \in \partial \Omega$ are defined in \eqref{eq:defPA}.

\begin{lemma} \label{lem:errinit}
Suppose that \eqref{eq:pde} has a smooth solution satisfying \eqref{eq:regul}. 
Then there exist $0 < \gamma \leq 1$ and  $h_0>0$ such that  \eqref{eq:DE98prednew} has a unique solution $x^1_h \in \Vh$ satisfying
\begin{equation} \label{eq:212}
\| I_h x(\cdot,t_1) - x^1_h \|_{1}^2 \leq c(h^4 + (\Delta t)^4),
\end{equation}
provided that $0<\Delta t \leq \gamma h$ and $0<h \leq h_0$. 
\end{lemma}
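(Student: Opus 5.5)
We prove Lemma~\ref{lem:errinit} in two steps: well-posedness of the linear system \eqref{eq:DE98prednew}, then a one-step consistency/stability argument in the discrete norms \eqref{eq:norms}. We use that $\| I_h x^1 - x^1_h\|_1$ is equivalent to $\|e^1_h\|_{1,h}$, with $e^1_j = x^1_{h,j}-x^1_j$ and $e^0=0$.

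\emph{Well-posedness.} Existence follows from uniqueness, so we show the homogeneous system with unknown $w=x^1_h-x^0_h$ has only the trivial solution. Since $\Delta t\le\gamma h$, the perturbation terms $\frac{2\Delta t}{h (q^0_1)^2}A(x^0_0)\delta^+x^0_0$ and its counterpart at $\rho_J$ are $\mathcal O(\gamma)$. Thus the boundary conditions \eqref{eq:DE98prednewb}, \eqref{eq:DE98prednewc} say $w_0\cdot \nu_0=0$ for a vector $\nu_0=\nabla F(x^0_0)+\mathcal O(\gamma)$, and similarly at $\rho_J$. Likewise the matrices $\Id\pm\mathcal O(\gamma)A$ in \eqref{eq:DE98prednewd}, \eqref{eq:DE98prednewe} are uniformly invertible. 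We test the interior equations with $h w_j$ and sum by parts via \eqref{eq:sbp}. The boundary terms $\delta^+w_0\cdot w_0$ are handled by splitting $w_0$ into its $P(x^0_0)$-component and its normal component. The normal component is $\mathcal O(\gamma)|P w_0|$ by the modified orthogonality. The tangential flux is given by \eqref{eq:DE98prednewd} in terms of $Pw_0/\Delta t$. This yields a coercive form
\[
\tfrac{1}{\Delta t}\, c\,|w|_{0,h}^2 + |w|_{1,h}^2 \;\le\; C\gamma\,\tfrac{1}{\Delta t}\,\bigl(|w_0|^2+|w_J|^2\bigr)\, h^{-1}\cdots ,
\]
with the boundary mass weight $h (q^0_1)^2/2\Delta t$ absorbing the $\mathcal O(\gamma)$ cross terms for $\gamma$ small. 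Hence $w=0$.

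\emph{Error estimate.} We derive error equations for $e^1$ as in \eqref{eq:erra2}. In the interior the truncation error of backward Euler is $\mathcal O(\Delta t + h^2)$. Because $e^0=0$, this consistency error only has to be multiplied by $\Delta t$: testing with $h e^1_j$ gives $\tfrac{1}{\Delta t}|e^1|_{0,h}^2+|e^1|_{1,h}^2 \lesssim$ (truncation)$\,\cdot\,|e^1|$. This yields $|e^1|_{0,h}\lesssim \Delta t(\Delta t+h^2)$ and $|e^1|_{1,h}^2\lesssim \Delta t(\Delta t+h^2)^2\le c(h^4+(\Delta t)^4)$ once $\Delta t\le \gamma h$. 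It is cleaner to test with $-h\delta^2 e^1_j$ as in \eqref{eq:erra3} to get the $H^1$ bound directly, using $\tfrac{1}{\Delta t}|e^1|_{1,h}^2$ on the left.

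\emph{Main obstacle: the boundary consistency.} The boundary rows must be consistent to order $\mathcal O(\Delta t^2 + h^2)$ after division by the natural scalings. Here the curvature terms $A$ enter. We Taylor expand $x^1_0=x^0_0+\Delta t\,x_t+\tfrac12\Delta t^2x_{tt}$. Using \eqref{eq:pdeb} at $t_1$ together with \eqref{eq:difnormal},
\[
\nabla F(x^1_0)=\nabla F(x^0_0)+\Delta t\, A(x^0_0)x_t+\mathcal O(\Delta t^2).
\]
Combined with $x_t = x_{\rho\rho}/|x_\rho|^2$ and $\delta^+ x^0_0 = x_\rho + \tfrac h2 x_{\rho\rho}$, the modified normal $\nabla F+\frac{2\Delta t}{h q^2}A\delta^+x^0$ reproduces $\nabla F(x^{1/2})$-type accuracy. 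Specifically, $\frac{2\Delta t}{hq^2}A\,\delta^+x^0$ equals $\Delta t\,A x_t$ up to the normal-direction part $\frac{2\Delta t}{h q^2}A x_\rho$, which vanishes since $x_\rho\parallel\nabla F$ and $A$ is tangential. The same mechanism, via \eqref{eq:difproj}, handles the projection rows \eqref{eq:DE98prednewd}: $P(x^1)-P(x^0)$ acting on $x_\rho$ produces exactly the term $(\delta^+x^0\cdot\nabla F)A\,P(x^1-x^0)$.

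We would verify these two expansions carefully, checking that residuals are $\mathcal O(\Delta t^2+h^2)$ relative to the $\frac{1}{h}$-scaled flux. The residual then contributes $h\,\Delta t(\Delta t^2+h^2)|D e^1_0|$-type terms, which are absorbed as in the treatment of $T_1,T_2$, using $|e^1_0|$ controlled by \eqref{eq:dsi0}. If the cross terms prove troublesome, we would absorb them with the boundary mass weight, which is why $\Delta t\le\gamma h$ is required.
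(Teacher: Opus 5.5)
\textbf{There is a genuine gap: your interior error estimate does not reach the claimed order.}

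Your bound $|e^1_h|_{1,h}^2\le c\,\Delta t\,(\Delta t+h^2)^2$ contains the term $(\Delta t)^3$. Testing with $-h\,\delta^2 e^1_j$ after dividing by $\Delta t$ gives the same bound. This term is \emph{not} bounded by $c(h^4+(\Delta t)^4)$ under $\Delta t\le\gamma h$: for $\Delta t=\gamma h$ it equals $\gamma^3h^3\gg h^4$.

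The missing idea concerns the dominant residuals $\mathfrak f^0_{1,j}+\mathfrak f^0_{2,j}$ of the undivided error equation. They are $\mathcal O((\Delta t)^2)$, \emph{and} they are nodal values of a smooth function of $\rho$. After testing with $-h\,\delta^2 e^1_j$ you must sum by parts with \eqref{eq:sbp}, moving the difference quotient onto these residuals. This yields $\tfrac12|e^1_h|_{1,h}^2+c(\Delta t)^4$. Only the $\mathcal O(\Delta t\,h^2)$ terms $\mathfrak f^0_3,\mathfrak f^0_4$ are absorbed into $\Delta t\,|e^1_h|_{2,h}^2$.

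The price is an extra boundary term $-(\mathfrak f^0_{1,J}+\mathfrak f^0_{2,J})\cdot\delta^- e^1_J$, and its counterpart at $\rho_0$. Via \eqref{eq:inv1} these are only of size $(\Delta t)^2h^{-1/2}|e^1_h|_{1,h}$, so they cost $(\Delta t)^4/h$, which is again inadmissible. They therefore have to be merged with $e^1_J\cdot\delta^- e^1_J$. The boundary analysis then concerns $B^0_J=e^1_J-(\mathfrak f^0_{1,J}+\mathfrak f^0_{2,J})=x^1_{h,J}-x^0_J-\Delta t\,x^1_{\rho\rho}(\rho_J)/|x^0_\rho(\rho_J)|^2$, as in \eqref{eq:bmj}, and not $e^1_J$.

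\textbf{This changes what the curvature correction must deliver, and your boundary plan targets the wrong quantity.}

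The normal part is paired with $|\delta^- e^1_J|\le h^{-1/2}|e^1_h|_{1,h}$. Hence one needs the \emph{third}-order bound $|B^0_J\cdot\nabla F(x^0_J)|\le ch\,|P(x^0_J)e^1_J|+c(h^3+(\Delta t)^3)$.

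Your own expansion shows that the corrected normal in \eqref{eq:DE98prednewc} approximates $\nabla F(x^1_J)$, not a midpoint normal. Since $F(x^0_J)=F(x^1_J)=0$, the exact solution therefore leaves the residual $(x^1_J-x^0_J)\cdot\{\nabla F(x^0_J)-\tfrac{2\Delta t}{h(q^0_J)^2}A(x^0_J)\delta^- x^0_J\}=\tfrac12(\Delta t)^2A(x^0_J)x_t(\rho_J,0)\cdot x_t(\rho_J,0)+\mathcal O(h^3+(\Delta t)^3)$. Consequently $e^1_J\cdot\nabla F(x^0_J)$ is in general genuinely of order $(\Delta t)^2$. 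Checking residuals of $x^1$ in the boundary rows cannot close the argument. The cancellation happens only for $B^0_J$, using $x^1_{\rho\rho}(\rho_J)\cdot\nabla F(x^1_J)=0$ together with \eqref{eq:difnormal}. Your treatment of the tangential rows via $P(x^1_J)-P(x^0_J)$ is the right one.

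\textbf{Your well-posedness sketch has a similar scaling defect.}

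With only $|w_J\cdot\nabla F(x^0_J)|\le c\gamma\,|P(x^0_J)w_J|$, Young's inequality and \eqref{eq:inv1} leave $c\gamma^2\frac{\Delta t}{h^2}|w|_{1,h}^2$. This cannot be absorbed without a restriction like $\Delta t\lesssim h^2$; the boundary mass weight of order $h/\Delta t$ is far too small for it.

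What is needed is $|P(x^0_J)\delta^- x^0_J|\le ch$, which follows from \eqref{eq:pdec} at $t=0$. It gives $|w_J\cdot\nabla F(x^0_J)|\le c\,\Delta t\,|P(x^0_J)w_J|$, and then the cross term costs only $c\,\Delta t\,|w|_{1,h}^2$.
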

\begin{proof}
Similarly to the proof of Lemma~\ref{lem:stab}, we can prove the well-posedness
of \eqref{eq:DE98prednew} via the uniqueness of the solution to the following
homogeneous system. Let $\bar x_h \in \Vh$ be such that
\begin{subequations} \label{eq:app:DE98homo}
\begin{align}
& \frac{1}{2 \Delta t} \bigl( (q^0_{j})^2 +  (q^0_{j+1})^2 \bigr) \bar x_{h,j} - \delta^2 \bar x_{h,j} =0, \quad
j=1,\ldots,J-1, \label{eq:app:DE98homoa} \\
&\bar x_{h,0}  \cdot \left\{ \nabla F(x^0_{0}) 
+ \frac{2\Delta t }{ h  (q^0_{1})^2}   A(x^0_0) \delta^+ x^0_0 
\right\} =0, \label{eq:app:DE98homob} \\
&\bar x_{h,J}  \cdot \left\{  \nabla F(x^0_{J}) 
- \frac{2\Delta t }{ h  (q^0_{J})^2}   A(x^0_J) \delta^- x^0_{J}  \right\} =0,
\label{eq:app:DE98homoc} \\
&  (q^0_{1})^2 \Bigl(\Id
+  \frac{2 \Delta t }{h (q^0_{1})^2} (\delta^+ x^0_{0}\cdot 
\nabla F (x^0_{0}))  A(x^0_0) \Bigr) P(x^0_0)  \frac{\bar x_{h,0}}{\Delta t}
- \frac{2}{h} P(x^0_0) \delta^+ \bar x_{h,0}  = 0,
\label{eq:app:DE98homod} \\
&   (q^0_{J})^2 \Bigl( \Id
- \frac{2 \Delta t }{h(q^0_{J})^2} (\delta^- x^0_{J}\cdot 
\nabla F (x^0_{J})) A(x^0_J)   \Bigr) P(x^0_J) 
 \frac{\bar x_{h,J}}{\Delta t} 
 + \frac{2}{h}  P(x^0_J) \delta^- \bar x_{h,J} = 0.
\label{eq:app:DE98homoe}
\end{align}
\end{subequations}
Multiplying \eqref{eq:app:DE98homoa} by $ h \bar x_{h,j}$, 
summing over $j=1,\ldots,J-1$, and using \eqref{eq:sbp} yields that
\begin{align} \label{eq:app:homo1}
& \frac{h}{2 \Delta t} \sum_{j=1}^{J-1} \bigl( (q^0_{j})^2 +  (q^0_{j+1})^2 \bigr)| \bar x_{h,j}|^2
+ | \bar x_h |_{1,h}^2
= \bar x_{h,J} \cdot \delta^- \bar x_{h,J} -  \bar x_{h,0} \cdot \delta^+ \bar x_{h,0} .
\end{align}
Abbreviating $d_J:= \nabla F(x^0_J)$ and $P_J:= P(x^0_J)$  we may write
\begin{equation} \label{eq:app:decompJ}
\bar x_{h,J} \cdot \delta^- \bar x_{h,J} = \bigl( \bar x_{h,J} \cdot d_J \bigr) \bigl( \delta^- \bar x_{h,J} \cdot  d_J \bigr) + P_J \bar x_{h,J} \cdot P_J \delta^- \bar x_{h,J} .
\end{equation}
From the boundary conditions \eqref{eq:app:DE98homoc} and \eqref{eq:app:DE98homoe}, we can substitute:
\begin{align}
\bar x_{h,J} \cdot d_J & = \frac{2\Delta t}{ h  (q^0_{J})^2} \bigl(  \bar x_{h,J} \cdot A(x^0_J)  \delta^- x^0_{J} \bigr) = \frac{2\Delta t}{ h (q^0_{J})^2} \bigl( P_J \bar x_{h,J} \cdot D^2 F(x^0_J) P_J \delta^- x^0_J \bigr)   , \label{eq:bound1} \\
P_J \delta^- \bar x_{h,J} & =  - \frac{h}{2\Delta t} (q^0_J)^2
\Bigl( \Id
- \frac{2 \Delta t }{h(q^0_{J})^2} (\delta^- x^0_{J}\cdot 
\nabla F (x^0_{J})) A(x^0_J)   \Bigr) P_J
\bar x_{h,J} . \label{eq:bound2}
\end{align}
Note that \eqref{eq:pdec} and $\delta^- x^0_{J}= x^0_{\rho}(\rho_J)+\mathcal O(h)$ imply that $| P_J \delta^- x^0_{J} | \leq c h$, so that
\[
| \bar x_{h,J} \cdot d_J | \leq c \Delta t | P_J \bar x_{h,J} | \leq c h | P_J \bar x_{h,J} | ,
\]
since $\Delta t \leq \gamma h \leq h$. Furthermore, using again that $\Delta t \leq \gamma h$, we have
\begin{align} 
\Bigl( \Id
- \frac{2 \Delta t }{h(q^0_{J})^2} (\delta^- x^0_{J}\cdot 
\nabla F (x^0_{J})) A(x^0_J)   \Bigr) v \cdot v
 & \geq \bigl( 1 - \frac{2 \Delta t q^0_J | A(x^0_J)| }{h (q^0_J)^2}  \bigr) | v |^2 \nonumber \\
 & \geq \bigl( 1 - 2 \gamma \frac{ | A(x^0_J)| }{q^0_J}  \bigr) | v |^2  \geq  \tfrac{1}{2} | v |^2 \label{eq:lowbound}
\end{align}
for all $v \in \bR^n$, provided that $\gamma | A(x^0_J) | \leq \tfrac14 q^0_J$.  Inserting \eqref{eq:bound1}, \eqref{eq:bound2} into \eqref{eq:app:decompJ}, and using the above estimates
as well as \eqref{eq:inv1}, we derive
\begin{align*} 
\bar x_{h,J} \cdot \delta^- \bar x_{h,J} 
& \leq c  h | P_J  \bar x_{h,J} | \, |  \delta^- \bar x_{h,J} |   - \frac{h}{4\Delta t} (q^0_J)^2  | P_J  \bar x_{h,J}|^2 \\
& \leq - \frac{h}{8\Delta t} (q^0_J)^2  | P_J  \bar x_{h,J} |^2 + c h \Delta t |  \delta^- \bar x_{h,J} |^2 \leq 
- \frac{h}{8\Delta t} (q^0_J)^2  | P_J  \bar x_{h,J} |^2 + c \Delta t | \bar x_h |_{1,h}^2.
\end{align*}
Arguing in the same way  for the left boundary point, we hence deduce from \eqref{eq:app:homo1} and \eqref{eq:app:decompJ} that
\[
\frac{h}{8\Delta t} \Bigl( (q^0_{1})^2  | P_0 \bar x_{h,0} |^2 +  (q^0_J)^2  | P_J  \bar x_{h,J} |^2 \Bigr)  + 
\frac{h}{2 \Delta t} \sum_{j=1}^{J-1} \bigl( (q^0_{j})^2 +  (q^0_{j+1})^2 \bigr)| \bar x_{h,j}|^2
+ (1 - c \Delta t)  | \bar x_h |_{1,h}^2 \leq 0.  
\]
Since $c \Delta t \leq c h  \leq c h_0  \leq \frac12$ if $h_0$ is small enough, 
 we deduce with the help of \eqref{eq:bound1} that 
$\bar x_{h,j} = 0$, $j=0,\ldots,J$ and hence there exists a unique solution to the system \eqref{eq:DE98prednew}.

Next we would like to prove the estimate \eqref{eq:212}. To this end, 
we rewrite \eqref{eq:DE98prednewa} in the form
\begin{equation} \label{eq:fd}
x^{1}_{h,j} - x^0_{j} - \Delta t \, \dfrac{2}{  (q^{0}_{j})^2 + (q^{0}_{j+1})^2} \,   \delta^2 x^{1}_{h,j}  = 0, \qquad j=1,\ldots,J-1.
\end{equation}
Combining \eqref{eq:fd} with \eqref{eq:csf}, and using the notation
\eqref{eq:em}, we derive the error relation
\begin{align*}
 & e^{1}_j  -  \Delta t \,  \dfrac{2}{  (q^{0}_{j})^2 + (q^{0}_{j+1})^2} \,   \delta^2 e^{1}_{j} 
 =  \Delta t \left(  x_t(\rho_j,0) - \frac{x^{1}_j-x^0_j}{ \Delta t} \right)+ \frac{\Delta t}{ | x^0_\rho(\rho_j) |^2} \bigl( x^{1}_{\rho \rho}(\rho_j) -x^0_{\rho \rho}(\rho_j) \bigr) \\ & \qquad 
 + \Delta t \left(  \dfrac{2}{  (q^{0}_{j})^2 + (q^{0}_{j+1})^2} - \frac{1}{ | x^0_\rho(\rho_j) |^2} \right) \delta^2 x^{1}_j  + \Delta t \, \frac{1}{ | x^0_\rho(\rho_j) |^2} \bigl( 
\delta^2 x^{1}_j - x^{1}_{\rho \rho}(\rho_j) \bigr) \\
& \quad
=:\sum_{\ell=1}^4 \mathfrak f^{0}_{\ell,j}, \quad
 j=1,\ldots,J-1.
\end{align*}
If we multiply by $-h \, \delta^2 e^{1}_j$, sum over $j=1,\ldots,J-1$ and use \eqref{eq:sbp} as well as \eqref{eq:regul}, we obtain
\begin{align} \label{eq:erra1}
| e^1_h|_{1,h}^2  + \frac{\Delta t}{16 C_0^2} \sum_{j=1}^{J-1}  h \,   | \delta^2 e^{1}_j |^2   
\leq e^1_J \cdot \delta^- e^{1}_J -   e^1_0 \cdot \delta^+ e^{1}_0 - \sum_{\ell=1}^4 \sum_{j=1}^{J-1} h \,  \mathfrak f^{0}_{\ell,j} \cdot \delta^2 e^{1}_j.  
\end{align}
In order to estimate the terms involving $\mathfrak f^0_{\ell,j}$ we follow the arguments in \cite{csftime} but take into account the boundary terms. For the first two terms we derive using \eqref{eq:sbp}
\begin{align*}
 - \sum_{j=1}^{J-1} h \,  \bigl( \mathfrak f^{0}_{1,j} +\mathfrak f^0_{2,j} \bigr)  \cdot \delta^2 e^{1}_j & = \sum_{j=1}^J h \,  \bigl( \delta^- \mathfrak f^0_{1,j} + \delta^- \mathfrak f^0_{2,j} \bigr)  \cdot \delta^- e^{1}_j \nonumber \\
& \qquad -(\mathfrak f^0_{1,J} + \mathfrak f^0_{2,J}) \cdot \delta^- e^{1}_J + (\mathfrak f^0_{1,0} + \mathfrak f^0_{2,0}) \cdot \delta^+ e^{1}_0 \\
& \leq \tfrac{1}{2} \sum_{j=1}^J h \,  | \delta^- e^{1}_j |^2 + c (\Delta t)^4 -(\mathfrak f^0_{1,J} + \mathfrak f^0_{2,J}) \cdot \delta^- e^{1}_J + (\mathfrak f^0_{1,0} + \mathfrak f^0_{2,0}) \cdot \delta^+ e^{1}_0,
\end{align*}
where we argue as for (3.17), (3.18) in \cite{csftime}. Using (3.20), (3.21) in \cite{csftime} we can estimate the remaining two terms as follows
\begin{align*}
& - \sum_{j=1}^{J-1} h \,  \bigl( \mathfrak f^{0}_{3,j} +\mathfrak f^0_{4,j} \bigr)  \cdot \delta^2 e^{1}_j \leq \frac{\Delta t}{16 C_0^2} \sum_{j=1}^{J-1} h \,  | \delta^2 e^{1}_j |^2 + c \Delta t  h^4 .
\end{align*}
In conclusion we obtain
\begin{align} 
& \tfrac12  | e^{1}_h |_{1,h} ^2 \leq  B^0_J \cdot \delta^- e^{1}_J - B^0_0 \cdot \delta^+ e^{1}_0 + c\bigl( h^4 + (\Delta t)^4 \bigr), \label{eq:err2}
\end{align}
where 
\begin{equation} \label{eq:bmj}
B^0_0:= e^{1}_0 - (\mathfrak f^0_{1,0} + \mathfrak f^0_{2,0}), \quad B^0_J:=  e^{1}_J  - (\mathfrak f^0_{1,J} + \mathfrak f^0_{2,J}).
\end{equation}
It remains to examine the boundary terms in \eqref{eq:err2}. Let us 
decompose
\begin{align}
 B^0_J \cdot \delta^- e^{1}_J  \label{eq:b1}  
 = P_J  B^0_J  \cdot P_J \delta^- e^{1}_J  + \bigl( B^0_J  \cdot d_J  \bigr) \bigl( \delta^- e^{1}_J \cdot d_J \bigr) ,
\end{align}
where we abbreviate again $d_J:=\nabla F(x^0_J)$, $P_J:= P(x^0_J)$ and $A_J:=A(x^0_J)$.
In order to handle the first term in \eqref{eq:b1}, we use \eqref{eq:DE98prednewe} and write
\begin{align} \label{eq:bm2} 
P_J \delta^- e^{1}_J & = P_J \delta^- x^{1}_{h,J} 
- P_J \delta^- x^{1}_J  \nonumber \\
& =  - \tfrac12  \frac{h}{\Delta t}   (q^0_{J})^2 \Bigl( \Id
- \frac{2 \Delta t }{h(q^0_{J})^2} (\delta^- x^0_{J}\cdot d_J) A_J   \Bigr) P_J
 (x^{1}_{h,J} - x^0_{J})
 - P_J \delta^- x^{1}_J . 
\end{align}
Let us consider the last term in the above relation. Taylor expansion yields
\begin{align*}
\delta^- x^{1}_J = x^{1}_\rho(\rho_J) - \tfrac12 h x^{1}_{\rho \rho}(\rho_J) + \mathcal O(h^2) ,
\end{align*}
so that we obtain similarly as in \eqref{eq:difproj}, with the help of \eqref{eq:pdec},
\begin{align} \label{eq:dife}
P_J  \delta^- x^{1}_J   & = \bigl(  P_J - P(x^1_J) \bigr) x^{1}_\rho(\rho_J)  
- \tfrac12 h P_J x^{0}_{\rho\rho}(\rho_J)  
+ \mathcal O(h^2 + h\Delta t)  \nonumber \\ 
&   =  \bigl( \nabla F(x^0_J) \cdot x^0_\rho(\rho_J) \bigr) A_J  (x^1_J -x^0_J) - \tfrac12 h P_J x^{0}_{\rho\rho}(\rho_J)  
+ \mathcal O(h^2 + (\Delta t)^2) \nonumber \\
& =  (\delta^- x^0_J \cdot d_J) A_J  (x^{1}_J - x^0_J) 
- \tfrac12 h (q^0_J)^2 P_J  \frac{x^{1}_J - x^0_J}{\Delta t} 
+ \mathcal O(h^2 + (\Delta t)^2) \nonumber \\
& =  - \frac{h}{2\Delta t} (q^0_J)^2    \left\{ \Id - \frac{2 \Delta t}{h (q^0_J)^2} (\delta^- x^0_J \cdot d_J) A_J  \right\} P_J (x^{1}_J - x^0_J)+
\mathcal O(h^2 + (\Delta t)^2).
\end{align} 
If we insert the above relation into \eqref{eq:bm2}, we obtain
\begin{align*}
& P_J \delta^- e^{1}_J 
 = - \frac{h}{2\Delta t}  (q^0_{J})^2  \left\{ \Id - \frac{2 \Delta t}{h (q^0_J)^2} (\delta^- x^0_J \cdot d_J) A_J  \right\}
P_J e^{1}_{J}  
 + \mathcal O(h^2 + (\Delta t)^2).
\end{align*}
On noting that $|\mathfrak f^0_{1,J}| + | \mathfrak f^0_{2,J}| \leq c (\Delta t)^2$, as well as \eqref{eq:regul} and \eqref{eq:lowbound}, we infer that 
\begin{align}
P_J B^0_J \cdot P_J \delta^- e^{1}_J & =  P_J \bigl( e^1_J - (\mathfrak f^0_{1,J} + \mathfrak f^0_{2,J}) \bigr) \cdot  P_J \delta^- e^{1}_J 
\nonumber \\ & 
\leq - \frac{h}{4 \Delta t} (q^0_J)^2  | P_J   e^1_J |^2 +  c \bigl|  P_J e^{1}_{J}  \bigr| \bigl( h^2 + (\Delta t)^2  \bigr) + c (h^4 + (\Delta t)^4)  \nonumber  \\ & 
 \leq \bigl( - \tfrac1{16} c_0^2 + \epsilon \bigr)   \frac{h}{\Delta t} | P_J   e^{1}_{J} |^2 + c_\epsilon \bigl(  h^4 + (\Delta t)^4 \bigr),
 \label{eq:est1} 
\end{align}
since $\Delta t \leq h$. 
Let us next consider the second term in \eqref{eq:b1}. We write
\begin{align}
B^0_J & = x^{1}_{h,J} - x^{1}_J -  \Delta t \left(  x_t(\rho_J,0) - \frac{x^{1}_J-x^0_J}{ \Delta t} \right)- \frac{\Delta t}{ | x^0_\rho(\rho_J) |^2} \bigl( x^{1}_{\rho \rho}(\rho_J) -x^0_{\rho \rho}(\rho_J) \bigr) \nonumber \\
& = x^{1}_{h,J} - x^0_{J}  -   \Delta t \frac{x^{1}_{\rho \rho}(\rho_J) }{ | x^0_\rho(\rho_J) |^2}. \label{eq:bmj1}
\end{align}
Using \eqref{eq:bmj1}, \eqref{eq:DE98prednewc} and the fact that $x^1_{\rho \rho}(\rho_J) \cdot \nabla F(x^1_J)=0$, we obtain
\begin{align}\label{eq:bmj2} 
B^0_J \cdot d_J & = (x^{1}_{h,J} - x^0_{J}) \cdot d_J -  \Delta t   \frac{x^{1}_{\rho \rho}(\rho_J) }{ | x^0_\rho(\rho_J) |^2}   \cdot d_J \nonumber \\ & 
=   \frac{2\Delta t }{ h (q^0_{J})^2}  A_J  \delta^- x^0_{J} \cdot (x^{1}_{h,J} - x^0_{J})
 +  \Delta t
\frac{x^{0}_{\rho \rho}(\rho_J) }{ | x^0_\rho(\rho_J) |^2}   \cdot \bigl(  \nabla F(x^{1}_J)  -  \nabla F(x^0_J)\bigr)  \nonumber \\
& \quad + \Delta t \frac{x^{1}_{\rho \rho}(\rho_J) - x^0_{\rho \rho}(\rho_J) }{ | x^0_\rho(\rho_J) |^2}   \cdot \bigl( \nabla F(x^{1}_J)  -  \nabla F(x^0_J)  \bigr) =: B_{1,1}+B_{1,2}+B_{1,3}.
\end{align}
Let us focus first on $B_{1,2}$. On recalling \eqref{eq:difnormal}, we have
\begin{align*}
& B_{1,2}   
=  \Delta t A_J  (x^{1}_J - x^0_J) \cdot  \frac{x^{0}_{\rho \rho}(\rho_J) }{ | x^0_\rho(\rho_J) |^2}  + \mathcal O((\Delta t)^3).
\end{align*} 
Taylor expansion yields
\begin{align*}
\frac{x^0_{\rho \rho}(\rho_J)}{| x^0_{\rho}(\rho_J)|^2} =  \frac{2}{h (q^0_J)^2}  \bigl( x^0_\rho (\rho_J) -\delta^- x^0_J \bigr) + \mathcal O(h) ,
\end{align*}
so that, on recalling that $P(x^0_J) x^0_\rho(\rho_J)=0$, we
have
\begin{align}
& B_{1,2} = -  \frac{2 \Delta t }{h (q^0_J)^2}  A_J  \delta^- x^0_J \cdot (x^{1}_J - x^0_J) + \mathcal O(h^3 + (\Delta t)^3). \label{eq:b13}
\end{align}
Inserting \eqref{eq:b13} into \eqref{eq:bmj2}, we derive
\begin{align*}
& B^0_J  \cdot d_J = 
\frac{2\Delta t }{ h (q^0_{J})^2}   A_J \delta^- x^0_{J} \cdot P_J e^{1}_{J} + B_{1,3} + \mathcal O(h^3 + (\Delta t)^3).
\end{align*}
As $| P_J \delta^- x^0_J | \leq ch$ and $\Delta t \leq h$, we therefore deduce that
\begin{align}  \label{eq:bm1est}
& \left| B^0_J  \cdot d_J \right| 
  \leq  c h \left| P_J e^{1}_{J}  \right| +  c(h^3 + (\Delta t)^3 ). 
\end{align}
Hence, using \eqref{eq:bm1est} together with the inverse estimate
\eqref{eq:inv1} yields that
\begin{align}
\bigl(  B^0_J \cdot d_J \bigr)  \bigl( \delta^- e^{1}_J \cdot d_J  \bigr) & \leq  \left| B^0_J  \cdot d_J \right| \, | \delta^- e^{1}_J | 
\leq c h^{\frac{1}{2}}  \left|  P_J e^{1}_{J}    \right| \, | e^1_h |_{1,h}
+ ch^{-\frac{1}{2}} (h^3 + (\Delta t)^3 ) | e^{1}_h |_{1,h} \nonumber  \\ 
& \leq  \frac{c_0^2}{32}   \frac{h}{\Delta t} | P_J   e^{1}_{J}  |^2 +  \bigl(c \Delta t + \tfrac{1}{4} \bigr)  | e^{1}_h |_{1,h}^2 +  c h^{-1}  \bigl( h^6 + (\Delta t)^6 \bigr) \nonumber \\
& \leq  \frac{c_0^2}{32}   \frac{h}{\Delta t} | P_J   e^{1}_{J}  |^2 +  \bigl(c \Delta t + \tfrac{1}{4} \bigr)  | e^{1}_h |_{1,h}^2 +  c  \bigl( h^4 + (\Delta t)^4 \bigr) ,
\label{eq:est2}
\end{align}
where we have used again that $\Delta t \leq h$. 
Inserting \eqref{eq:est1} and \eqref{eq:est2} into \eqref{eq:err2}, and arguing in the same way for the left boundary point, we obtain, after choosing $\epsilon$ sufficiently small,
\begin{align} \label{eq:app:erra3} 
| e^1_h |_{1,h}^2 +  \frac{h}{\Delta t} | P_0   e^{1}_{0}  |^2 + \frac{h}{\Delta t} | P_J e^{1}_{J}  |^2  \leq c \bigl( h^4 + (\Delta t)^4 \bigr).
\end{align}
It follows from the definition of $B^0_J$ and \eqref{eq:bm1est} that
\begin{align*}
& |  e^{1}_{J}  \cdot d_J  | \leq |   B^0_J  \cdot d_J  | + c (\Delta t)^2 \leq c h | P_J  e^{1}_{J} | + c \bigl( h^3 + (\Delta t)^2 \bigr),
\end{align*}
which, combined with an analogous estimate for the left end point, and inserting into \eqref{eq:app:erra3}, yields
\begin{align*}
| e^1_h |_{1,h}^2 + | e^1_0 |^2 + | e^1_J |^2 \leq c \bigl( h^4 + (\Delta t)^4 \bigr).
\end{align*}
Together with \eqref{eq:dpi} and the fact that $\| I_h x(\cdot,t_1) - x^1_h \|_1 \leq c \| e^{1}_h \|_{1,h}$ this implies the assertion of the lemma. 
\end{proof}

\end{appendix}

\def\soft#1{\leavevmode\setbox0=\hbox{h}\dimen7=\ht0\advance \dimen7
  by-1ex\relax\if t#1\relax\rlap{\raise.6\dimen7
  \hbox{\kern.3ex\char'47}}#1\relax\else\if T#1\relax
  \rlap{\raise.5\dimen7\hbox{\kern1.3ex\char'47}}#1\relax \else\if
  d#1\relax\rlap{\raise.5\dimen7\hbox{\kern.9ex \char'47}}#1\relax\else\if
  D#1\relax\rlap{\raise.5\dimen7 \hbox{\kern1.4ex\char'47}}#1\relax\else\if
  l#1\relax \rlap{\raise.5\dimen7\hbox{\kern.4ex\char'47}}#1\relax \else\if
  L#1\relax\rlap{\raise.5\dimen7\hbox{\kern.7ex
  \char'47}}#1\relax\else\message{accent \string\soft \space #1 not
  defined!}#1\relax\fi\fi\fi\fi\fi\fi}

\end{document}